\numberwithin{equation}{section}
\newtheorem{theorem}{Theorem}[section]
\newtheorem{corollary}[theorem]{Corollary}
\newtheorem{lemma}[theorem]{Lemma}
\theoremstyle{definition}
\newtheorem{conjecture}[theorem]{Conjecture}
\newtheorem*{remark}{Remark}
\def\P{\mathcal{P}}
\def\C{\mathcal{C}}
\def\HH{\mathcal{H}_{38}}
\def\eref#1{$(\ref{#1})$}
\def\sref#1{\S$\ref{#1}$}
\def\lref#1{Lemma~$\ref{#1}$}
\def\tref#1{Theorem~$\ref{#1}$}
\def\Tref#1{Table~$\ref{#1}$}
\def\cjref#1{Conjecture~$\ref{#1}$}
\def\cyref#1{Corollary~$\ref{#1}$}
\def \deg {{\rm deg}}
\def \leq {\leqslant}
\def \geq {\geqslant}
\def \le {\leqslant}
\def \ge {\geqslant}
\def \mod#1{{\:({\rm mod}\ #1)}}
\let\oldproofname=\proofname
\renewcommand{\proofname}{\rm\bf{\oldproofname}}
\def\eps{\varepsilon}
\newcommand{\ignore}[1]{}
\title{On Ryser's Conjecture for Linear\\
Intersecting Multipartite Hypergraphs\footnote{Research
supported by ARC grant DP120100197}}
\author{Nevena Franceti\'{c}\footnote{
School of Mathematical Sciences, Monash University Victoria 3800 Australia.} 
\quad Sarada Herke \footnotemark[2] \quad Brendan D. McKay\footnote{Research School of 
Computer Science, Australian National University,
Canberra, ACT 0200, Australia}\quad Ian M. Wanless\footnotemark[2]}
\date{}
\begin{document}

\maketitle

\begin{abstract}
Ryser conjectured that $\tau\le(r-1)\nu$ for $r$-partite hypergraphs,
where $\tau$ is the covering number and $\nu$ is the matching number.
We prove this conjecture for $r\le9$ in the special case of \emph{linear 
intersecting}
hypergraphs, in other words where every pair of lines meets in exactly
one vertex.

Aharoni formulated a stronger version of Ryser's conjecture which
specified that each $r$-partite hypergraph should have a cover of size
$(r-1)\nu$ \emph{of a particular form}.  We provide a counterexample to
Aharoni's conjecture with $r=13$ and $\nu=1$.

We also report a number of computational results.  For $r=7$, we find
that there is no linear intersecting hypergraph that achieves the
equality $\tau=r-1$ in Ryser's conjecture, although non-linear
examples are known.  We exhibit intersecting non-linear examples
achieving equality for $r\in\{9,13,17\}$. Also, we find that $r=8$ is
the smallest value of $r$ for which there exists a linear intersecting
$r$-partite hypergraph that achieves $\tau=r-1$ and is not isomorphic to a
subhypergraph of a projective plane.
\end{abstract}

\section{Introduction}


A \emph{hypergraph} $H$ is a set of non-empty subsets, variously
called \emph{lines, edges} or \emph{hyperedges}, of a finite
underlying vertex set $V(H)$.  The \emph{degree} of a vertex $v\in
V(H)$, denoted $\deg(v)$, is the number of lines in $H$ that contain
$v$. A hypergraph is \emph{$r$-uniform} if every line contains exactly
$r$ vertices. Thus a $2$-uniform hypergraph is simply a graph. 

Covers and matchings in hypergraphs are widely studied \cite{F88}.  A
\emph{cover} of a hypergraph $H$ is a set of vertices ${C}\subseteq
V(H)$ such that every line of $H$ contains at least one vertex of
${C}$.  The \emph{covering number} of $H$, denoted $\tau(H)$, is the
minimum size of a cover of $H$.  A \emph{matching} in $H$ is a set of
pairwise disjoint lines of $H$ and the \emph{matching number} of $H$,
denoted $\nu(H)$, is the maximum size of a matching in $H$. Most
hypergraphs in this paper are \emph{intersecting}, meaning that every
pair of lines meets in at least one vertex; equivalently $\nu=1$.

The covering number and matching number of a hypergraph are related.
First, for every hypergraph, $\nu \leq \tau$ since each cover contains
at least one vertex from each line in any given matching.  Second, for
every $r$-uniform hypergraph, $\tau \leq r\nu$ since a cover can be
obtained from the union of the lines in a maximal matching; this bound
is sharp and is achieved, for example, by projective planes of order $r-1$.

A hypergraph is \emph{$r$-partite} if its vertex set can be
partitioned into $r$ sets, called \emph{sides}, such that every line
consists of exactly one vertex from each side.  Hence every
$r$-partite hypergraph is necessarily $r$-uniform.  An $r$-partite
hypergraph can be constructed from a projective plane of order $r-1$
by removing a single vertex $v$ and the $r$ lines through~$v$.  The
resulting hypergraph $\P'_r$ is called a \emph{truncated projective
  plane} and the $r$ sides of $\P'_r$ are defined by the sets of
vertices on each of the removed lines.  Our notation hides the fact
that there may be non-isomorphic choices for the structure of $\P'_r$,
but the distinction between the different choices will not matter in
our work.


The following famous conjecture is due to Ryser~\cite{R67}.

\begin{conjecture}\label{conj:Ryser}
Every $r$-partite hypergraph with covering number $\tau$ and matching
number $\nu$ satisfies $\tau \leq (r-1)\nu$.
\end{conjecture}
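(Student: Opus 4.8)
The plan is to prove the conjecture by induction on $\nu$, with the intersecting case $\nu=1$ (where the claim reads $\tau\le r-1$) as the crux. The greedy cover assembled from the union of the lines of a maximum matching already gives $\tau\le r\nu$, so the entire content of the conjecture is to save one vertex per matching edge --- equivalently, to produce a cover that omits at least one vertex of each matching line. For the inductive step I would fix a maximum matching $M=\{e_1,\dots,e_\nu\}$ and hunt for a vertex $v$ incident to $e_1$ such that covering all lines through $v$ by $v$ itself, and then deleting $v$ together with those lines, drops the matching number by exactly one; this would close the induction with the required saving of $1$. The obstruction is that no such ``efficient'' vertex need exist, which is precisely the behaviour that makes the bound tight for the truncated projective planes $\P'_r$.

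Because a naive greedy reduction stalls, the real engine I would use is the topological method that settled the case $r=3$. I would reformulate a cover of the target size as an independent system of representatives in an auxiliary graph $G$ whose vertex classes are indexed by the matching edges and whose edges encode conflicts between candidate cover-vertices, so that a cover of the desired size exists exactly when $G$ admits such a transversal. By Haxell's theorem and its topological sharpening --- an independent transversal is guaranteed once the connectivity $\eta$ of the independence complex $\mathcal{I}(G)$ is at least the number of classes, which in the intersecting case is $r-1$ --- the problem becomes one of lower-bounding $\eta(\mathcal{I}(G))$. Here the $r$-partite structure is the resource: each of the $r$ sides supplies a coordinate of freedom, and one hopes for a connectivity bound scaling like $r-1$, exactly matching the target.

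The fractional relaxation supplies a partial result and a structural constraint. By linear-programming duality the fractional covering and matching numbers agree, $\tau^*=\nu^*$, whence $\nu\le\nu^*=\tau^*\le\tau$, and F\"uredi's theorem gives $\tau\le(r-1)\nu^*$ for every $r$-partite hypergraph. The conjecture therefore holds automatically whenever the integral and fractional matching numbers coincide, and in general reduces to controlling the integrality gap $\nu^*/\nu$: any hypothetical counterexample must have $\nu^*>\nu$ and must locally resemble a truncated projective plane, information I would feed back into the topological argument to constrain the auxiliary graph $G$.

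The main obstacle is the second step: proving $\eta(\mathcal{I}(G))\ge r-1$ for $r\ge4$. Aharoni's argument shows that for $r=3$ the connectivity is just barely sufficient, but for larger $r$ the available estimates fall short and no combinatorial substitute is known --- this is exactly the gap between the fractional bound $(r-1)\nu^*$ and the integral target $(r-1)\nu$, and it is what has kept the conjecture open. Closing it in full is the decisive difficulty; absent a new connectivity estimate, the topological bound would have to be forced through case by case as $r$ grows.
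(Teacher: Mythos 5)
There is a genuine gap, and it is unavoidable: the statement you set out to prove is \cjref{conj:Ryser} itself, which is an open problem. The paper contains no proof of it and claims none; it only establishes restricted cases (the linear intersecting case for $r\le 9$, Theorem~\ref{thm: r <= 9}, via elementary counting --- the degree bounds of Lemmas~\ref{l:tau_r_requirements} and~\ref{l:maxdegalmdisj}, the discriminant argument of Theorem~\ref{t:quadratic_arg}, and computer search --- with the prior art being Tuza's $r\le5$ intersecting case and Aharoni's $r=3$ case). Your proposal is an accurate survey of the standard attack, but it is a research program rather than a proof, and your own final paragraph concedes the decisive step: the connectivity bound $\eta(\mathcal{I}(G))\ge r-1$ for the relevant independence complex is exactly what is not known for $r\ge4$. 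A proof attempt whose key lemma is announced as unproved does not prove the statement.

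Two further steps in your sketch are flawed even as a program. First, the induction on $\nu$ is miscalibrated: if a vertex $v$ existed such that taking $v$ into the cover and deleting $v$ and all lines through it dropped the matching number by one, recursion would yield $\tau\le 1+(r-1)(\nu-1)$, far stronger than the conjecture --- so such a $v$ generically cannot exist, and indeed no reduction from general $\nu$ to the intersecting case $\nu=1$ is known; the intersecting case is a strictly weaker, and still open, problem for $r\ge6$ (the paper's own results for $r\le9$ need linearity as well). Second, you misstate F\"uredi's theorem: the known fractional result is $\tau^*\le(r-1)\nu$, not $\tau\le(r-1)\nu^*$. Consequently the conjecture follows automatically when $\tau=\tau^*$, not, as you claim, when $\nu=\nu^*$; your proposed reduction to ``controlling the integrality gap $\nu^*/\nu$'' rests on an inequality that is itself unproved. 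Note also that a hypothetical extremal example need not resemble a truncated projective plane $\P'_r$: the paper exhibits $\HH$ (an $8$-partite linear intersecting hypergraph with $\tau=7$ not embeddable in $\P'_8$) and non-linear examples for $r\in\{9,13,17\}$ where no projective plane of order $r-1$ exists, so feeding ``locally projective-plane-like'' structure into the topological argument is not a legitimate constraint.
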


Ryser's Conjecture is far from being resolved.  
When $r=2$, the conjecture is K\"{o}nig's Theorem for bipartite 
graphs, which is also equivalent to Hall's Theorem~(see e.g.~\cite{vLW01}).  
Using topological methods, Aharoni and Haxell \cite{AH00} proved a hypergraph 
generalisation of Hall's Theorem and Aharoni \cite{A01} used this result to 
prove \cjref{conj:Ryser} for $r=3$. 
\cjref{conj:Ryser} was proved for intersecting $r$-partite hypergraphs 
with $r\leq5$ by Tuza \cite{T83}.  Haxell and Scott \cite{HS12} 
built on this result to prove that for $r\le5$, 
there exists $\epsilon>0$ such that $\tau < (r-\epsilon)\nu$
for all $r$-partite hypergraphs.

In this paper, we prove another restricted version of Ryser's
Conjecture for small $r$. A hypergraph is \emph{linear} if each pair
of lines meets in at most one vertex.  Hence, in a \emph{linear
  intersecting} hypergraph, each pair of lines intersects in exactly
one vertex.  In~\sref{sec:linear_upto_9} we prove that
\cjref{conj:Ryser} holds for $r \leq 9$
when $H$ is a linear intersecting hypergraph.
The proof of the $r=9$ case is computational.


In an intersecting $r$-partite hypergraph each side is a cover and each
line is also a cover. In~\cite{ABW15} the following strenthening
of \cjref{conj:Ryser} is recorded. The conjecture is due to Aharoni,
as are several generalisations that are also given in~\cite{ABW15}.

\begin{conjecture}\label{conj:variation}
An intersecting $r$-partite hypergraph $H$ has a side of size at most
$r-1$ or a cover of the form $e\setminus\{v\}$ for some $e \in H$ and
$v \in e$.
\end{conjecture}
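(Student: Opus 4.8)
The plan is to \emph{refute} this conjecture rather than prove it: the abstract already announces a counterexample at $r=13$, $\nu=1$, so the task is to exhibit an intersecting $r$-partite hypergraph $H$ witnessing the failure of \cjref{conj:variation}. Unpacking the negation, I need an intersecting $r$-partite $H$ for which (i) every side has at least $r$ vertices, and (ii) for no line $e\in H$ and vertex $v\in e$ is $e\setminus\{v\}$ a cover. Condition (i) is forced because in an intersecting $r$-partite hypergraph every side is itself a cover, so a side of size at most $r-1$ would instantly satisfy the conjecture; condition (ii) is the genuine content to be defeated.

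First I would rewrite condition (ii) locally. Since $H$ is intersecting, a line $f$ that avoids $e\setminus\{v\}$ must still meet $e$, which forces $f\cap e=\{v\}$. Hence (ii) says: for every line $e$ and every $v\in e$ there is another line meeting $e$ in exactly the vertex $v$. In the linear case this collapses to the clean requirement that every vertex lying on a line has degree at least $2$; in the general non-linear case it is the stronger demand that, for a line $e$ with vertices $v_1,\dots,v_r$, each pencil through a $v_i$ contains a line touching $e$ only at $v_i$. This reformulation immediately explains why the obvious extremal objects are useless: a truncated projective plane $\P'_r$ does possess covers of the form $e\setminus\{v\}$ and so \emph{satisfies} the conjecture, meaning any counterexample must come from a genuinely different, highly symmetric structure.

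With the reformulation in hand I would hunt for the smallest $r$ admitting such an $H$ by computer search, generating intersecting $r$-partite hypergraphs up to isomorphism (nauty-style isomorph rejection) while pruning aggressively with constraint (i) and the local form of (ii). Because an unstructured search is hopeless, I would seed it with algebraic or finite-geometric scaffolds, constructions built from a small field or design whose automorphism group acts transitively enough to make (i) and (ii) plausible, and grow the hypergraph orbit by orbit. Once a candidate $H$ is located at $r=13$, verification is routine: confirm that $H$ is intersecting, that every side has at least $13$ vertices, and that for each of the $13$ vertices of each line there is an explicit witness line meeting it in just that vertex.

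The main obstacle is the search itself: the space of intersecting $13$-partite hypergraphs is astronomically large, and condition (ii) is a \emph{global} non-existence property (``no special cover exists'') that is costly to test and awkward to enforce during construction. The crux is therefore to identify the right structural scaffold, most likely a symmetric design or geometry on which the intersecting and witness-line conditions can be imposed orbitwise, so that the search becomes feasible and the resulting counterexample can be described compactly.
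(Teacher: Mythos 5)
Your framing is right: the paper does refute \cjref{conj:variation} rather than prove it, and your reformulation of the negation is exactly the key observation the paper uses --- in a \emph{linear} intersecting hypergraph, $e\setminus\{v\}$ fails to be a cover precisely when some second line passes through $v$ (it then meets $e$ only at $v$), so it suffices to build a linear intersecting $r$-partite hypergraph with all sides of size $r$ and minimum degree at least $2$. However, your proposal stops at the point where the paper's proof actually begins. The entire mathematical content of Theorem~\ref{thm:counterexABW} is the explicit object: a cyclic $13$-partite hypergraph with $39$ lines obtained by developing three starter lines modulo $13$, each side having exactly $13$ vertices and $\delta=2$, together with a verification that it is linear intersecting. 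That verification is not ``routine'' in the sense you suggest: linearity is proved by a distance argument --- for $i=j$ the starter lines are built on a $4$-extended Skolem sequence of order $6$ so that each nonzero cyclic shift of $e_i$ meets $e_i$ in exactly one vertex, and for $i\neq j$ one checks a table of pairwise level-distances to see that no distance repeats, then counts $1+3\cdot 4=13$ forced intersections to upgrade $|e_i\cap e_j^t|\le 1$ to equality. A proposal that says ``search for the right scaffold'' without exhibiting it, and that defers the crux to an unspecified computer search over an ``astronomically large'' space, has not established the existence claim; as written it is a research plan, not a proof.

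One smaller point: your assertion that a truncated projective plane $\P'_r$ ``does possess covers of the form $e\setminus\{v\}$'' is false. $\P'_r$ is linear intersecting with every vertex of degree $r-1\ge 2$, so by your own reformulation no such cover exists there; $\P'_r$ satisfies \cjref{conj:variation} only via its first clause, since each side has size $r-1$. This does not damage your overall strategy, but it shows the reformulation was not being applied consistently, and it is precisely the reason the counterexample must have all sides of size at least $r$ \emph{in addition to} minimum degree at least $2$ --- both of which the paper's $r=13$ construction achieves.
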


In~\sref{sec:counterex} we show that \cjref{conj:variation} is false,
by providing a counterexample when $r = 13$.
Furthermore, in \sref{sec:mols} we describe an infinite family of
linear intersecting $r$-partite hypergraphs with $\tau=r-2$, built
from mutually orthogonal latin squares.  Although this family of
hypergraphs have covers consisting of an line with a vertex removed,
they have the property that no minimal cover is contained within a
line or a side.

In studying \cjref{conj:Ryser} it is natural to investigate hypergraphs
that achieve the equality $\tau=(r-1)\nu$.  A well-known infinite
family of such hypergraphs with $\nu=1$ is the family of truncated projective
planes $\P'_r$ (where $r-1$ is a prime power).  Note that $\tau \leq
r-1$ since each side of $\P'_r$ has $r-1$ vertices and $\tau \ge r-1$
since each vertex lies on $r-1$ lines and the total number of lines is
$(r-1)^2$.  However, $\P'_r$ has more lines than is necessary in the sense
that many of its subhypergraphs achieve $\tau=r-1$. Conversely, we will
report in \sref{sec:comp} that for $r\le7$ the only way to achieve
$\tau=r-1$ in a linear intersecting $r$-partite hypergraph is to take
a subhypergraph of $\P'_r$.  In particular, there are no
linear intersecting $7$-partite hypergraphs with
$\tau=6$, since $\P'_7$ does not exist. By contrast, it was shown in
\cite{AP,ABW15} that there are non-linear intersecting $7$-partite hypergraphs
with $\tau=6$.  We also describe in~\sref{s:H38} a linear
intersecting hypergraph having $r=8$ and $\tau=7$ and which is not a
subhypergraph of $\P_8'$. Moreover, in~\sref{sec:non_prime_power} we
give examples of non-linear intersecting $r$-partite hypergraphs with
$r \in \{9,13,17\}$ which have covering number $r-1$.




\subsection{Notation}

We deal with $r$-partite hypergraphs throughout. To avoid degeneracies
we always assume that $r\ge2$ and that every vertex has positive
degree. The sides of our hypergraphs are always denoted
$V_0,V_1,\dots,V_{r-1}$ and we have $V=\cup_{i=0}^{r-1}V_i$.  
The covering number, matching number, minimum degree and maximum degree of
a hypergraph are denoted by $\tau$, $\nu$, $\delta$ and $\Delta$,
respectively. The number of lines in a hypergraph $H$ is denoted $h$
or $|H|$. We often use discrete interval notation $[n_1,n_2] = \{n_1,
n_1+1, n_1+2, \dots, n_2 \}$, where $n_1$ and $n_2$ are two integers
and $n_1 \leq n_2$.

\section{Ryser's conjecture for linear intersecting hypergraphs}\label{sec:linear_upto_9} 

In this section we prove that \cjref{conj:Ryser} holds for all linear
intersecting hypergraphs with at most nine sides.  We begin by
establishing some properties of a hypothetical counterexample.

\subsection{General properties}

\begin{lemma}\label{l:tau_r_requirements}
Let $H$ be an intersecting $r$-partite hypergraph with $\tau(H) = r$.
Then
\begin{itemize}\itemsep=0pt
\item[(i)] Each side of $H$ has size at least $r$.
\item[(ii)] Each vertex of $H$ has degree at least $2$.
\item[(iii)] Each line of $H$ contains at most one vertex of degree $2$.
\end{itemize}
\end{lemma}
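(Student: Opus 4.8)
The common thread in all three parts is that $\tau=r$ is the \emph{largest} value the covering number can take for an intersecting hypergraph: here $\nu=1$, so $\tau\le r\nu=r$ always. Hence to prove each property I would assume it fails and then exhibit a cover of size at most $r-1$, contradicting $\tau=r$. The basic tool is the observation that for any line $e$ and any $v\in e$, the set $e\setminus\{v\}$ covers \emph{every} line of $H$ except those $f$ with $f\cap e=\{v\}$: since $H$ is intersecting we have $f\cap e\ne\emptyset$, and if $f\cap e$ contains a vertex other than $v$ then that vertex lies in $e\setminus\{v\}$. Part (i) then needs no contradiction at all: each side $V_i$ meets every line in exactly one vertex, so $V_i$ is itself a cover, whence $\tau\le|V_i|$ and therefore $|V_i|\ge\tau=r$.

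For (ii), suppose some $v\in e$ has $\deg(v)=1$, so that $e$ is the only line through $v$. Then no line $f\ne e$ can satisfy $f\cap e=\{v\}$, because such an $f$ would pass through $v$. By the tool above, $e\setminus\{v\}$ covers every line (it covers $e$ too, as $r\ge2$), giving a cover of size $r-1$ and contradicting $\tau=r$.

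For (iii), suppose a line $e$ contains two vertices $u,w$ of degree $2$; let $e_u$ be the unique line other than $e$ through $u$, and $e_w$ the unique line other than $e$ through $w$. Starting from $e\setminus\{u,w\}$ (of size $r-2$), the only lines that can escape are those meeting $e$ solely inside $\{u,w\}$, and since the lines through $u$ are just $e,e_u$ and those through $w$ are just $e,e_w$, any such line is among $e$, $e_u$, $e_w$; the line $e$ itself is covered once $r\ge3$. I would then split into cases. If $e_u=e_w$, this common line contains $u$, so $e\setminus\{w\}$ already covers everything and has size $r-1$. If $e_u\ne e_w$, the intersecting property supplies a vertex $x\in e_u\cap e_w$, and $(e\setminus\{u,w\})\cup\{x\}$ is a cover of size at most $r-1$. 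Either way $\tau\le r-1$, a contradiction.

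The cover-counting is routine; the main obstacle is the bookkeeping in (iii): one must check that \emph{no} line other than $e,e_u,e_w$ can avoid $e\setminus\{u,w\}$ (this is precisely where $\deg(u)=\deg(w)=2$ is used), treat the coincidence $e_u=e_w$ separately, and dispose of the small case $r=2$, where $e\setminus\{u,w\}$ is empty. For $r=2$ the statement is in fact vacuous, since an intersecting $2$-partite (hence bipartite) hypergraph is a star and has $\tau=1\ne r$, so I would simply assume $r\ge3$ throughout the construction.
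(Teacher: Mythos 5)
Your proposal is correct and follows essentially the same argument as the paper: each side is a cover for (i), $e\setminus\{v\}$ is an $(r-1)$-cover for (ii), and $(e\setminus\{u,w\})\cup\{x\}$ with $x$ in the intersection of the (at most) two other lines through $u$ and $w$ for (iii). Your extra bookkeeping (the case $e_u=e_w$ and the vacuous case $r=2$) is subsumed in the paper's phrasing ``the (at most) two lines other than $\ell$'' but is a reasonable elaboration of the same idea.
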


\begin{proof}
Each side is a cover so it contains at least $\tau(H)$ vertices,
hence (i) holds.  If a vertex $v$ has degree $1$ and $\ell$ is the line
containing $v$, then $\ell \backslash \{v\}$ is a cover of size $r-1$,
hence (ii) holds.  Finally, if there is a line $\ell$ containing distinct
vertices $u$ and $v$ of degree $2$, then we get a cover of size $r-1$
by taking $\ell \backslash \{u,v\} \cup \{x\}$, where $x$ is a vertex
in the intersection of the (at most) two lines other than $\ell$ that meet 
$\{u,v\}$.  Therefore, (iii) is proved.
\end{proof}

\begin{lemma}\label{l:maxdegalmdisj}
Let $H$ be an intersecting $r$-partite hypergraph with
$\tau(H)=r$. Then $\Delta \ge 4.$ Furthermore, if $H$ is linear, then 
$\Delta\le r-2$.
\end{lemma}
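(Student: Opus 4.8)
The statement bundles two independent claims, so I would prove them separately, each by contradiction, taking the structural constraints (i)--(iii) of \lref{l:tau_r_requirements} as the starting point. The first claim ($\Delta\ge 4$) needs only double counting; the second ($\Delta\le r-2$ in the linear case) is where linearity does the real work.

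\textbf{The lower bound $\Delta\ge 4$.} I would assume $\Delta\le 3$, so that by (ii) every vertex has degree $2$ or $3$, and then squeeze the number of lines $h$ into a two-element range. Fixing any line $\ell$ and using that $H$ is intersecting (every other line meets $\ell$ in at least one vertex) gives $h\le\sum_{x\in\ell}\deg(x)-(r-1)$; since (iii) forbids two degree-$2$ vertices on $\ell$, the right-hand side is at most $3r-(r-1)=2r+1$. Counting incidences gives $rh=\sum_{v}\deg(v)\ge 2|V|\ge 2r^2$ by (ii) and (i), so $h\ge 2r$. It then remains to eliminate the two surviving values: if $h=2r$ the incidence bound is tight, forcing $|V|=r^2$ and every vertex to have degree exactly $2$, contradicting (iii); if $h=2r+1$ the line bound is tight for every $\ell$, forcing $H$ to be $3$-regular, whence $3|V|=r(2r+1)$ and $|V|=r(2r+1)/3<r^2$ for $r\ge 2$, contradicting (i).

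\textbf{The upper bound $\Delta\le r-2$ for linear $H$.} I would assume $\Delta\ge r-1$ and pick $v$ of degree $\Delta$, say $v\in V_0$. Linearity makes the ``petals'' $P_j=\ell_j\setminus\{v\}$ of the lines $\ell_1,\dots,\ell_\Delta$ through $v$ pairwise disjoint. Since $\tau=r\ge 2$, some line $m$ avoids $v$; being intersecting, $m$ meets each $\ell_j$, necessarily inside $P_j$, so $m$ spends at least $\Delta$ of its $r-1$ vertices outside $V_0$ on the petals. This forces $\Delta=r-1$ and, crucially, shows that every line avoiding $v$ has all of its $r-1$ non-$V_0$ vertices equal to petal vertices, one per petal. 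The finishing step exploits (i): side $V_1$ contains only $r-1$ petal vertices yet has size at least $r$, so it has a vertex $b$ lying on no line through $v$; any line through $b$ (which exists by (ii)) avoids $v$, yet its $V_1$-vertex $b$ is not a petal vertex, contradicting the rigidity just established.

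\textbf{Main obstacle.} I expect the difficulty to lie entirely in the second part. The tempting route---building an explicit cover of size $r-1$ to contradict $\tau=r$---seems hard, because every natural size-$(r-1)$ candidate (a petal transversal, or a line with one vertex deleted) misses a whole pencil of lines through a single vertex. The key realisation is to avoid cover construction altogether: once $\Delta=r-1$, the hypergraph becomes rigid enough that every line off $v$ is forced to be a transversal of the petals with exactly one vertex per petal, and the contradiction is then read off from the size constraint $|V_1|\ge r$ of \lref{l:tau_r_requirements}(i) rather than from any cover. Establishing that rigidity is the crux; the counting in the first part is routine once the two bounds on $h$ are in hand.
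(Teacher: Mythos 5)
Your proof is correct and follows essentially the same strategy as the paper: a double-counting argument (bounding the number of lines from above via a fixed line's degrees and from below via vertex degrees and side sizes) to force $\Delta\ge4$, and the disjoint-petal argument showing that when $\Delta=r-1$ every vertex of $V_1$ must lie on a line through $v$, so $|V_1|\le r-1$, contradicting $\tau=r$. The only differences are cosmetic: the paper combines its two bounds on $h$ into one inequality in $\Delta$ rather than splitting into the cases $h=2r$ and $h=2r+1$, and it phrases the final contradiction as $\tau\le r-1$ rather than as a violation of $|V_1|\ge r$.
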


\begin{proof}
  By \lref{l:tau_r_requirements}, we may assume that $\delta(H)\ge2$.
  Suppose there are $h$ lines in $H$. If we count the lines which
  intersect a given line, we find that $h \leq(\Delta-1)r+1$ with
  equality only if $H$ is $\Delta$-regular.  If we count the lines
  incident with a side that contains a vertex of degree $\Delta$, we
  find that $h \ge 2(r-1)+\Delta$, with equality only if all other
  vertices on that side have degree~$2$.  The above observations
  together show that $\Delta \ge 4$.

  Now suppose that $H$ is linear and let $v$ be a vertex of degree
  $\Delta$. Without loss of generality, assume that $v$ is on side
  $V_0$ and let $u\in V_0\setminus\{v\}$ (such a $u$ exists, since
  $\tau(H)>1$).  Any line $\ell$ through $u$ meets the lines through
  $v$ in $\Delta$ distinct vertices on sides other than $V_0$, since
  $H$ is linear and intersecting. Hence $\Delta\leq r-1$.  If
  $\Delta=r-1$ then side $V_1$ has at most $r-1$ vertices.  Indeed,
  $\ell$ cannot contain any vertex on side $V_1$ other than the
  vertices on lines through $v$. As this is true for any line $\ell$
  which does not contain $v$, there can only be $r-1$ vertices in
  $V_1$, which means that $\tau(H)\leq r-1$. We conclude that
  $\Delta\leq r-2$.
\end{proof}

\begin{theorem}\label{t:quadratic_arg}
Let $H$ be an intersecting $r$-partite hypergraph
with $\tau(H)=r$ and maximum degree $\Delta$. Then 
\begin{equation}\label{e:lem}
(r+1/2)^2 \Delta^2+4r^2\ge(8r^2-2r)\Delta.
\end{equation}
\end{theorem}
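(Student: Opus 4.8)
The plan is to fix a vertex $v$ of maximum degree $\Delta$ and turn the hypotheses supplied by \lref{l:tau_r_requirements} and \lref{l:maxdegalmdisj} into a single numerical inequality relating the number of lines $h$, the number of sides $r$, and $\Delta$, which can then be read as a quadratic in $\Delta$. Since $H$ is intersecting, every pair of lines shares a vertex, so counting incident pairs of lines through each vertex gives $\sum_{x}\deg(x)^2 \ge h^2+(r-1)h$, while $\deg(x)\le\Delta$ forces $\sum_x \deg(x)^2 \le \Delta r h$ (because $\sum_x\deg(x)=rh$). Together these yield the basic upper bound $h \le (\Delta-1)r+1$, which I would record first; it is essentially the counting already used in \lref{l:maxdegalmdisj} and is the easy half of the argument.

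The substance is a matching lower bound on $h$ strong enough to be quadratic once combined with the above. Let $V_0$ be the side of $v$ and let $W=\bigcup_{\ell \ni v}(\ell\setminus\{v\})$ be the set of neighbours of $v$; since $H$ is intersecting, every line avoiding $v$ meets $W$, so $W$ covers the subhypergraph $H'$ of lines not through $v$. The covering hypothesis enters here: if $r-2$ vertices covered $H'$ then adjoining $v$ would cover $H$ with $r-1$ vertices, contradicting $\tau(H)=r$; hence $\tau(H')\ge r-1$. I would combine this with the structural facts that $W$ meets each of the $r-1$ sides $V_1,\dots,V_{r-1}$ in at most $\Delta$ vertices and that, by \lref{l:tau_r_requirements}, every vertex has degree at least $2$ and every line carries at most one vertex of degree $2$. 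Counting the incidences between the lines of $H'$ and the sides carrying $W$, and balancing the resulting product of order $\Delta(r-1)$ against the requirement $\tau(H')\ge r-1$, should produce a lower bound on $h$ that grows quadratically in $\Delta$ once the $\Delta$ lines of the pencil through $v$ are added back in.

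Finally I would substitute the two bounds into one another, clear denominators, and collect terms to obtain \eref{e:lem}; the half-integer coefficient $(r+\tfrac12)^2$ suggests that the decisive step averages an odd quantity such as $2r+1$ (for instance the $2(r-1)$ lines forced on a side together with the pencil), so completing the square is the natural way to finish. I expect the genuine obstacle to be the lower bound of the second paragraph: the naive estimates (a side is a cover, giving only $h\ge \Delta+2(r-1)$; or covering $H'$ line by line) are linear in $\Delta$ and merely reproduce \lref{l:maxdegalmdisj}, so the hypothesis $\tau(H)=r$ must be exploited \emph{quantitatively}, extracting a genuinely quadratic constraint on $h$ from $\tau(H')\ge r-1$. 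Checking that the bookkeeping survives the non-linear case, where neighbours of $v$ may be shared between lines of the pencil and so inflate degrees inside $W$, is the part I would treat most carefully, since \tref{t:quadratic_arg} is asserted without any linearity assumption.
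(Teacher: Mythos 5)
Your opening observations are correct (the intersecting count $\binom{h}{2}\le\sum_x\binom{\deg x}{2}$ and the consequence $h\le(\Delta-1)r+1$), but the proposal has a genuine gap at precisely the point you flag yourself: the second step, a lower bound on $h$ that is quadratic in $\Delta$, is never actually produced, and the route you sketch for it is unlikely to yield one. Passing to the subhypergraph $H'$ of lines missing a maximum-degree vertex $v$ and using $\tau(H')\ge r-1$ only tells you that the neighbourhood $W$ of $v$, which meets each other side in at most $\Delta$ vertices, cannot be replaced by few vertices; every incidence count you can extract from that ($|W|\le(r-1)\Delta$, at least $2|B|$ lines through the vertices outside $W$, etc.) is linear in $\Delta$, which, as you concede, only reproduces \lref{l:maxdegalmdisj}. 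Since the whole content of \tref{t:quadratic_arg} is that missing quadratic constraint, the proof is not there.

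It is also worth noting that the paper obtains \eref{e:lem} by a structurally different mechanism, which you may want to compare against your plan. The hypothesis $\tau(H)=r$ is exploited \emph{globally}, not through a local pencil argument: each of the $r$ sides is a cover, so $|V(H)|=\sum_i d_i\ge r^2$, where $d_i$ counts vertices of degree $i$; incidence counts give $\sum_i i\,d_i=rh$; the intersecting property gives $\binom{h}{2}\le\sum_i\binom{i}{2}d_i$; and \lref{l:tau_r_requirements}(iii) gives $d_2\le h/2$. Eliminating $d_3$ and $d_\Delta$ from these relations shows that the quadratic $h^2-(\Delta r+\Delta/2+2r)h+3r^2\Delta$ is non-positive at the actual value of $h$; hence this quadratic has a real root, and its discriminant being non-negative is \emph{exactly} inequality \eref{e:lem} after expanding $\bigl(\Delta(r+\tfrac12)+2r\bigr)^2\ge12r^2\Delta$. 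So the conclusion is not the comparison of an upper and a lower bound on $h$ (your framework), but a discriminant condition; the half-integer $(r+\tfrac12)^2$ comes from the coefficient $\Delta r+\Delta/2+2r$, whose $\Delta/2$ term traces back to the bound $d_2\le h/2$, not to any averaging of $2r+1$ lines. If you want to salvage your local approach you would need to show directly that $h$ lies between the two roots of that quadratic, which again requires the global vertex count $|V(H)|\ge r^2$; without some such quantitative use of $\tau=r$ on the vertex side, no argument confined to lines and their pairwise intersections can succeed.
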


\begin{proof}
Suppose that $H$ contains $d_i$ vertices of degree $i$ for $2\leq
i\leq\Delta$.  Since each side of $H$ is a cover, there must be
$r^2+\eps$ vertices in $H$ for some $\eps\ge0$. Hence
\begin{equation}\label{e:di}
r^2+\eps=\sum_{i=2}^\Delta d_i.
\end{equation}
By counting vertex-line incidences we find that 
\begin{equation}\label{e:idi}
rh=\sum_{i=2}^\Delta id_i.
\end{equation}
Also, since $H$ is intersecting, we get the following inequality 
by counting incidences between lines:
\begin{equation}\label{e:iidi}
{h \choose 2}\leq \sum_{i=2}^\Delta{i\choose2}d_i.
\end{equation}
Our aim is to show that (\ref{e:iidi}) cannot be satisfied unless
(\ref{e:lem}) holds.  Since, by \lref{l:tau_r_requirements}(iii), no line of 
$H$ may contain
more than one vertex of degree $2$, 
$d_2=h/2-\eps'$ for some $\eps'\ge0$. Now solving
(\ref{e:di}) and (\ref{e:idi}) for $d_3$ and $d_\Delta$ 
(recall that $\Delta>3$ by \lref{l:maxdegalmdisj}) and substituting
into (\ref{e:iidi}), we get 
\begin{align}
h^2-(\Delta r+&\Delta/2+2r)h+3r^2\Delta \label{e:quadl}\\
&\leq 3\Delta\sum_{i=4}^{\Delta-1}d_i
-(\Delta+2)\sum_{i=4}^{\Delta-1}id_i
+\sum_{i=4}^{\Delta-1}i(i-1)d_i
-3\Delta\eps-(\Delta-2)\eps'  \nonumber\\
&\leq -\sum_{i=4}^{\Delta-1}(i-3)(\Delta-i)d_i -3\Delta\eps-(\Delta-2)\eps' 
\label{e:quadratic} \\
& \leq -\sum_{i=4}^{\Delta-1}(i-3)(\Delta-i)d_i \leq 0. \nonumber
\end{align}
Hence, the discriminant of the quadratic in $h$ given by \eqref{e:quadl} is 
non-negative, which implies~(\ref{e:lem}).
\end{proof} 

In a linear intersecting $r$-partite hypergraph with $\tau=r$, we have
$4\le\Delta\le r-2$, by \lref{l:maxdegalmdisj}. Substituting
$\Delta\in\{4,5,6\}$ into \eref{e:lem}, yields an immediate contradiction.
Hence:

\begin{corollary}\label{c:Delta<=7}
  If there exists a linear intersecting $r$-partite hypergraph $H$
  such that $\tau(H)=r$, then $\Delta(H)\ge7$.
\end{corollary}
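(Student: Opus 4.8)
The plan is to combine the two structural constraints already established. By \lref{l:maxdegalmdisj}, any linear intersecting $r$-partite hypergraph $H$ with $\tau(H)=r$ satisfies $4\le\Delta(H)\le r-2$; in particular, writing $\Delta=\Delta(H)$, the upper bound forces $r\ge\Delta+2$. Since we already know $\Delta\ge4$, to obtain the stronger bound $\Delta\ge7$ it suffices to rule out the three remaining possibilities $\Delta\in\{4,5,6\}$.

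First I would fix $\Delta$ to be one of $4$, $5$, $6$ and substitute that constant into the necessary inequality \eref{e:lem} supplied by \tref{t:quadratic_arg}. Because $\Delta$ is now a fixed number, \eref{e:lem} collapses to a quadratic inequality in the single variable $r$. For instance, putting $\Delta=4$ and simplifying reduces \eref{e:lem} to $12r^2-24r-4\le0$, which cannot hold once $r\ge3$; the analogous reductions for $\Delta=5$ and $\Delta=6$ likewise bound $r$ above by a small constant (below $4$ and below $7$, respectively).

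The final step is to confront these upper bounds on $r$ with the lower bound $r\ge\Delta+2$ coming from linearity via \lref{l:maxdegalmdisj}. For $\Delta=4,5,6$ this lower bound reads $r\ge6,7,8$, each of which lies strictly outside the admissible range produced by \eref{e:lem}. Hence every choice of $\Delta\in\{4,5,6\}$ yields a contradiction, leaving $\Delta\ge7$ as the only surviving possibility, which is the claim.

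There is no serious obstacle here: the argument is a finite case check resting entirely on the inequality already proved in \tref{t:quadratic_arg}, so the work is routine algebra in three cases. The one point requiring care is to keep \emph{both} inequalities in $r$ in view simultaneously — the upper bound extracted from \eref{e:lem} and the lower bound $r\ge\Delta+2$ forced by linearity — and to confirm that they are genuinely incompatible (not merely tight) for each of $\Delta=4,5,6$.
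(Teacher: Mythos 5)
Your proposal is correct and follows essentially the same route as the paper: the paper likewise substitutes $\Delta\in\{4,5,6\}$ into \eref{e:lem} and combines the resulting failure of that inequality with the bound $\Delta\le r-2$ from \lref{l:maxdegalmdisj} to reach a contradiction. Your write-up merely makes explicit the algebra that the paper compresses into the phrase ``yields an immediate contradiction.''
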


\begin{corollary}\label{c:r<=8} 
  \cjref{conj:Ryser} holds for all linear intersecting $r$-partite
  hypergraphs when $r\leq 8$.
\end{corollary}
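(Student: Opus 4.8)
The plan is to reduce the statement to a single nonexistence claim and then apply the two degree bounds already in hand. Because any intersecting hypergraph has $\nu=1$, \cjref{conj:Ryser} for such a hypergraph is exactly the assertion $\tau\le r-1$. Moreover, in an intersecting hypergraph every line meets every other line, so any single line is itself a cover; hence $\tau\le r$ holds unconditionally. Consequently the conjecture can fail only in the extremal case $\tau=r$, and it suffices to show that \emph{no} linear intersecting $r$-partite hypergraph with $\tau=r$ exists for any $r\le 8$.

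To establish this I would simply combine the two bounds on the maximum degree of a hypothetical such hypergraph. On one hand, \cyref{c:Delta<=7} (obtained from the discriminant estimate of \tref{t:quadratic_arg}) tells us that any linear intersecting $r$-partite hypergraph with $\tau=r$ must satisfy $\Delta\ge 7$. On the other hand, the linearity half of \lref{l:maxdegalmdisj} forces $\Delta\le r-2$. Putting these together yields $r-2\ge 7$, that is $r\ge 9$. This contradicts $r\le 8$, so no linear intersecting $r$-partite hypergraph with $\tau=r$ can exist in that range; therefore every such hypergraph has $\tau\le r-1=(r-1)\nu$, which is precisely \cjref{conj:Ryser}.

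There is essentially no remaining obstacle: all the genuine difficulty has already been discharged into the counting/quadratic argument giving $\Delta\ge 7$ and into the linearity argument giving $\Delta\le r-2$, and the corollary is just the observation that these two inequalities are mutually incompatible below $r=9$. The only point that warrants a moment's care is the reduction itself, namely verifying that $\tau=r$ is the unique way Ryser's bound could be violated for an intersecting hypergraph; but this follows immediately from the fact that a single line covers an intersecting hypergraph, giving $\tau\le r$ and hence $\tau\in\{1,\dots,r\}$.
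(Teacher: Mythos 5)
Your proposal is correct and follows exactly the paper's (implicit) argument: since an intersecting hypergraph has $\nu=1$ and any line is a cover, the only possible violation is $\tau=r$, which is ruled out for $r\le 8$ by combining $\Delta\ge 7$ from \cyref{c:Delta<=7} with $\Delta\le r-2$ from \lref{l:maxdegalmdisj}. No further comment is needed.
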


\subsection{Linear \boldmath intersecting $9$-partite hypergraphs}\label{s:r=9}

In this subsection we prove that a linear intersecting $9$-partite
hypergraph has covering number at most $8$. For a potential
counterexample $H$, let $h=|H|$ be the number of lines in $H$ and
$\eps:=|V(H)| - r^2\ge0$.  First, we demonstrate some further
properties that such a hypergraph would have, if it were to exist.
Note that we may assume that $\Delta=7$, given \cyref{c:Delta<=7} and
\lref{l:maxdegalmdisj}.

\begin{lemma}\label{l:lower bound on h}
If $H$ is a linear intersecting $9$-partite hypergraph with $\Delta = 7$ and 
$\tau=9$, then $h\ge39$.
\end{lemma}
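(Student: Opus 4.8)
The plan is to combine the three global counting identities from \tref{t:quadratic_arg} with the structural restrictions already established, and then to inject local information coming from a vertex of maximum degree. Writing $d_i$ for the number of vertices of degree $i$ (with $2\le i\le7$, by \lref{l:tau_r_requirements} and \cyref{c:Delta<=7}), I would record $\sum_i d_i = 81+\eps$ and $\sum_i i\,d_i = 9h$, together with the key point that, because $H$ is linear \emph{and} intersecting so that every pair of lines meets in exactly one vertex, the incidence count $\sum_i \binom i2 d_i = \binom h2$ holds with \emph{equality}, not merely as the inequality \eref{e:iidi}. By \lref{l:tau_r_requirements}(iii) each line carries at most one vertex of degree $2$, so $d_2 \le h/2$; I would set $d_2 = h/2 - \eps'$ with $\eps'\ge0$.

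First I would extract a preliminary bound from these relations alone. Eliminating $d_2$ and using the nonnegativity of $\sum_{i=3}^{7}(i-3)(7-i)d_i$ (the envelope argument already implicit in \tref{t:quadratic_arg}), the identities collapse to a single quadratic inequality in $h$: after clearing denominators, $2h^2 - 169h + 3402 \le 0$, which forces $h \ge 34$.

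The difficulty is that this is as far as pure counting can reach: the degree sequence $d_2=17,\ d_3=42,\ d_5=4,\ d_7=18$ (with $\eps=\eps'=0$) satisfies all three identities together with $d_2=h/2$ at $h=34$, so no global count alone can exclude $34\le h\le38$. The missing lines must be found from local structure. I would fix a vertex $v$ of degree $\Delta=7$ on side $V_0$ and study the seven lines through $v$. Linearity forces their $56$ non-$v$ vertices to be distinct, seven on each of the remaining eight sides, and, exactly as in the proof of \lref{l:maxdegalmdisj}, every line avoiding $v$ meets all seven of these lines and so has precisely two vertices outside this ``cloud'' (its vertex on $V_0$ and one further vertex). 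Counting incidences between the $h-7$ lines missing $v$ and the cloud then pins down $\sum_{x\in\mathrm{cloud}}\deg(x) = 7(h+1)$, and combining this with the per-side identity $\sum_{x\in V_j}\deg(x)=h$ and the side-size bound $|V_j|\ge9$ from \lref{l:tau_r_requirements}(i) should constrain the placement of the low-degree vertices far more tightly than $d_2\le h/2$ does.

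I expect the main obstacle to be precisely this last step: converting the cloud and side-size data into one extra inequality strong enough to lift the counting bound from $34$ to $39$. The extremal counting solutions dump a large mass onto degree-$2$ and degree-$3$ vertices while saturating $d_2=h/2$, so the real content is to show that such a distribution is incompatible with the simultaneous demands that every side have degree-sum $h$ and size at least $9$ and that each line meet the degree-$7$ cloud in exactly seven points. I anticipate closing the gap by a short case analysis on the side sizes (or on how many degree-$2$ and degree-$3$ vertices a side can carry) rather than by a single clean algebraic inequality.
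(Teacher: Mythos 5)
Your setup is exactly right, but the proof is not finished: you explicitly defer the decisive step to ``a short case analysis on the side sizes,'' and that step is where the entire content of the lemma lives. In fact no case analysis is needed, and the global counting identities of your first two paragraphs (which, as you correctly compute, yield only $h\ge34$) play no role in the paper's argument at all. The missing observation is the following. Let $A$ be your ``cloud'' of $56$ vertices on the lines through the degree-$7$ vertex $v\in V_0$, and let $B$ be the set of vertices lying neither in $V_0$ nor in $A$. By \lref{l:tau_r_requirements}(i) each of the eight sides $V_1,\dots,V_8$ has at least $9$ vertices, so $|B|\ge 8\cdot 9-56=16$. You have already established that every line avoiding $v$ meets $A$ in exactly $7$ vertices and hence contains exactly one vertex of $B$; conversely, every line through a vertex of $B$ avoids $v$, since the lines through $v$ have all their non-$V_0$ vertices in $A$. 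As every vertex of $B$ has degree at least $2$ by \lref{l:tau_r_requirements}(ii), counting incidences between $B$ and the lines avoiding $v$ gives at least $2|B|\ge 32$ such lines, whence $h\ge 7+32=39$.

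By contrast, the quantities you propose to exploit --- the incidence count $\sum_{x\in A}\deg(x)=7(h+1)$, the per-side degree sums $\sum_{x\in V_j}\deg(x)=h$, and the saturation $d_2=h/2$ --- all concern the vertices \emph{inside} the cloud or the global degree distribution, and it is not clear they can be combined to move the bound from $34$ to $39$. The leverage comes entirely from the vertices \emph{outside} the cloud: there are many of them (because sides are large), each is hit exactly once by any line missing $v$, and each has degree at least $2$. Until that double count (or an equivalent) is supplied, the proposal does not prove the lemma.
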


\begin{proof}
  Let $u$ be a vertex of degree $\Delta=7$, and without loss of
  generality, assume that $u \in V_0$. Let $A$ be the set of vertices
  that lie on lines through $u$ in the remaining eight sides.  Then
  $|A|=8\cdot 7=56$. Let $B = V(H) \setminus (V_0 \cup A)$. Then
  $|B|\geq 16$.

  Let $\ell$ be a line through a vertex in $B$. Then $\ell$ is
  incident with a vertex other than $u$ in side $V_0$ and it
  intersects all seven lines incident with $u$. That is, $|\ell\cap A|
  = 7$ and $|\ell \cap B|=1$.  Since every vertex in $B$ has degree at
  least $2$, there are at least $2|B| \geq 32$ such lines.  Therefore,
  $h \geq 7+32 = 39$.
\end{proof}


By \eref{e:quadratic}, 
\begin{equation}\label{e:quadratic with eps}
  h^2-(\Delta r+ \Delta/2+2r)h+3r^2\Delta \leq - 3\Delta\eps.
\end{equation}

This inequality has no integer solution for $h$ when $\eps\geq 5$ if
$r = 9$ and $\Delta = 7$. Otherwise, together with \lref{l:lower bound on h}, 
we obtain that $h_{min}(\eps) \leq h\leq h_{max}(\eps)$
where $h_{min}(\eps)$ and $h_{max}(\eps)$ are as follows:
\[
\begin{array}{c|ccccc}
  \eps & 0 & 1 & 2 & 3 & 4  \\
  \hline
  h_{min}  & 39 & 39 & 39 & 39 & 42 \\
  h_{max}  & 51 & 50 & 48 & 46 & 42 \\
\end{array}
\]

\begin{lemma}\label{l:deg7s diff sides} 
  If $H$ is a linear intersecting $9$-partite hypergraph with
  $\Delta=7$ and $\tau(H) = 9$, then for every pair of degree $7$ vertices
  on different sides there is a line of $H$ that contains both vertices.
\end{lemma}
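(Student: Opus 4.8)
The plan is to argue by contradiction: suppose $u\in V_0$ and $w\in V_1$ both have degree $7$ but no line of $H$ contains both, and try to produce a cover of size at most $8$, contradicting $\tau(H)=9$. Write $\ell_1,\dots,\ell_7$ for the lines through $u$ and $m_1,\dots,m_7$ for the lines through $w$. The first and most important step is to pin down how these two pencils interlock. Because $H$ is linear and intersecting and no line contains both $u$ and $w$, each $m_k$ meets each $\ell_i$ in a single vertex that can lie neither on $V_0$ (that would force it to be $u$, whence $u\in m_k$) nor on $V_1$ (that would force $w=\ell_i\cap V_1$); as $i$ ranges over $1,\dots,7$ these seven meeting points are distinct and occupy the seven sides $V_2,\dots,V_8$. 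Hence the part of $m_k$ outside $V_0\cup V_1$ consists exactly of its intersections with the $\ell_i$, and symmetrically for each $\ell_i$. This forces the $49$ vertices $\ell_i\cap m_k$ to form a $7\times7$ grid on $V_2,\dots,V_8$; moreover the vertices $s_i:=\ell_i\cap V_1$ are distinct with $w\notin S:=\{s_1,\dots,s_7\}$, and the feet $f_k:=m_k\cap V_0$ are distinct and different from $u$, with $F:=\{f_1,\dots,f_7\}$.

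I would then use the grid to build an almost-cover. The eight-vertex set $S\cup\{w\}$ meets every line except those whose $V_1$-vertex avoids $S\cup\{w\}$: each $\ell_i$ is met at $s_i$, each $m_k$ at $w$, and a short case check (using that any line meets each $\ell_i$ at a grid vertex or at $s_i$) shows every remaining line either passes through some $s_i$ or has its $V_1$-vertex in $R:=V_1\setminus(S\cup\{w\})$. The symmetric statement holds for $\{u\}\cup F$ with the leftover set $Z_0:=V_0\setminus(\{u\}\cup F)$. If $R=\emptyset$ then $V_1=S\cup\{w\}$ has only eight vertices, contradicting that the side $V_1$ is a cover of size at least $\tau=9$, and $S\cup\{w\}$ is already the desired cover. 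In general, however, $|V_1|\ge9$ gives $R\neq\emptyset$, and the only uncovered lines are those through a vertex of $R$; one checks these are exactly transversals of the grid that enter $V_0$ through $Z_0$. Since the seven lines through $w$ pairwise meet only at $w$, a cover avoiding $w$ must spend seven vertices on them, and likewise for $u$; this motivates seeking a cover of the shape $\{u,p_1,\dots,p_7\}$ with $p_k\in m_k\setminus\{w\}$, which automatically handles all $\ell_i$ and $m_k$, so that the whole problem reduces to choosing the $p_k$ so that every line through neither $u$ nor $w$ passes through some $p_k$.

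The crux, and the step I expect to be the main obstacle, is this last choice: absorbing both the transversal lines (which meet each $m_k$ at a grid vertex) and the lines through $S$ and $F$ into a single size-$8$ cover. The difficulty is that taking $p_k=f_k$ covers the lines through $F$ but misses every transversal line, while swapping some $p_k$ to a grid vertex re-exposes lines through $f_k$; so the representatives must be matched to the grid's transversal structure rather than chosen greedily. To control this I would exploit linearity on the transversal lines — two of them meet in at most one vertex, so each vertex of $R$ lies on at most $|Z_0|$ of them, and since every degree is at least $2$ this forces $|Z_0|\ge2$ and $|R|\ge2$, hence at least two ``extra'' vertices on each of $V_0$ and $V_1$ (and automatically two on each of $V_2,\dots,V_8$). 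Feeding this into the global bounds $39\le h\le 51$ and $|V(H)|\le r^2+4$ supplied by \tref{t:quadratic_arg}, \lref{l:lower bound on h} and \cyref{c:Delta<=7} pins the configuration down to a very small range of parameters; the expectation is that within this range the transversal lines are few and rigid enough that the representatives $p_k$ can always be chosen to meet them all (equivalently, that the configuration is impossible), delivering the cover of size $8$ and the contradiction $\tau\le 8$.
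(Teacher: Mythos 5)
Your setup is sound and matches the paper's: the $7\times7$ grid of intersection points on $V_2,\dots,V_8$, the identification of the uncovered lines as grid transversals running from $Z_0$ to $R$, and the observation that $\deg\ge2$ forces $|Z_0|\ge2$ and $|R|\ge2$ are all correct and correspond to the first half of the paper's argument. But the proof has a genuine gap at exactly the point you flag as the crux: you never actually choose the representatives $p_k$, and you never derive a contradiction. The final paragraph is an expectation, not an argument. Note also that the numerical route you sketch cannot close as stated: $|Z_0|,|R|\ge2$ gives only $\eps\ge2$, hence $h\le48$ from \eref{e:quadratic with eps}, and you have no matching lower bound above $39$, so the ``very small range of parameters'' is not small enough to finish by inspection.

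The paper closes the argument with two counting steps that your proposal is missing, and it never constructs a cover at all. First, it considers the bipartite graph induced by $H$ on $Z_0\cup R$ (its $V_0'\cup V_1'$) and uses not just $\delta\ge2$ but also \lref{l:tau_r_requirements}(iii) --- no line joins two vertices of degree $2$ --- to rule out the $K_{2,2}$ configuration; this forces $|Z_0|+|R|\ge5$, hence $\eps\ge3$ and $h\le46$. Second, it counts lines from the other direction: the $20$ lines identified so far meet each of $V_2,\dots,V_8$ only in the $7$ grid vertices, so there are at least $2$ further vertices per side, giving a set $B$ with $|B|\ge14$; every line through a vertex of $B$ meets $B$ exactly once (because it must meet all seven $e_i$ in the last seven sides), so these vertices generate at least $2|B|\ge28$ new lines and $h\ge48$, a contradiction. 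If you want to salvage your cover-construction plan instead, you would need to prove that a system of representatives $p_k\in m_k$ hitting every transversal line and every line through $S\cup F$ always exists --- a Hall-type statement you have given no reason to believe and which the paper sidesteps entirely.
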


\begin{proof}
  Suppose to the contrary that $v_0$ and $v_1$ are degree 7 vertices
  on sides $V_0$ and $V_1$, respectively, which do not lie on a common
  line.  Let $e_1, \dots, e_7$ be the lines which contain $v_0$ and
  $f_1, \dots, f_7$ be the lines which contain $v_1$.  For every $i, j
  \in [1, 7]$, lines $e_i$ and $f_j$ meet at a vertex in
  one of the last seven sides $V_2, \dots, V_8$.

%

Define $V_0'=V_0\setminus\bigl((\cup_i e_i)\cup (\cup_i f_i)\bigr)$ and 
$V_1'=V_1\setminus\bigl((\cup_i e_i)\cup(\cup_i f_i)\bigr)$. Since 
$|V_0|\ge9$, there exists $y\in V_0'$.
Any line $\ell$ through $y$ intersects each of $f_1, \dots, f_7$ in the last 
seven
sides and thus also intersects each of $e_1, \dots, e_7$ in the last seven
sides.  Hence $\ell$ contains a vertex in $V_1'$. By symmetry, all lines 
through $V_1'$ contain a vertex in $V_0'$. Let $G$ be the bipartite graph 
induced on 
$V_0'\cup V_1'$ by $H$.  Then $G$ inherits from $H$ the properties
of having minimum degree at least~$2$ and no line between vertices
of degree~$2$. Therefore $G$ has at least~$5$ vertices and 
at least~$6$ lines. It follows that $\eps \ge 3$ and hence 
$h \leq 46$, from~\eqref{e:quadratic with eps}.

We have already encountered 20 distinct lines of $H$, namely $e_1, \dots e_7,
f_1, \dots, f_7$ and at least $6$ lines through $V_0'$.  Let $B$ be
the set of vertices in the last seven sides which do not lie on any of
these 20 lines. Then $|B| \geq 14$, since $B$ includes at least two
vertices from each of $V_2,\dots,V_8$. Let $x \in B$, and let
$g$ be a line through $x$. Since $g$ intersects each line $e_1, \dots, e_7$, it
follows that $g\cap B=\{x\}$.  However $x$ has degree at least $2$.
Therefore $h \ge 20+2|B| \ge 48$, which is a contradiction.
\end{proof}  

\begin{remark} 
  Although the proof of \lref{l:deg7s diff sides} does not completely
  generalise to larger $r$, it does demonstrate that if $H$ is a
  linear intersecting $r$-partite hypergraph with $\Delta = r-2$,
  $\tau = r$ and $\epsilon \leq 2$, then each pair of degree $\Delta$
  vertices on different sides lie on a common line.
\end{remark}

\subsubsection{Degree sequences, line types, and side types}

Recall that $H$ is assumed to be a linear
intersecting $9$-partite hypergraph with $\tau(H)=r$ and $\Delta=7$.
For each $h$ and $\eps$ within the bounds given by~\eqref{e:quadratic with eps}, we computed the set $\mathcal{D}(h,\eps)$ 
of all possible degree sequences that
satisfy the equations~(\ref{e:di}),~(\ref{e:idi})~and~(\ref{e:iidi}).
Note that since $H$ is linear, equality is enforced
in~(\ref{e:iidi}). From here on, we denote a degree sequence in
$\mathcal{D}(h,\eps)$ by $[d_2,d_3,\dots,d_7]$, where $d_i$ is the number
of vertices of degree $i$, for $i \in [2, 7]$.  For each of the possible values of $h$ and $\eps$, we obtained 
the following number of degree sequences in the set 
$\mathcal{D}(h,\eps)$. An example souce code for this and other 
computational results in this section is posted on the arXiv as an 
ancillary file with the priprint of this article.

\begin{center}
\begin{tabular}{r|rrrrrrrrrrrrr}
$\eps \backslash h$ &  39 &  40 & 41 & 42 & 43 & 44 & 45 & 46 & 47 & 48 & 49 & 50 & 51\\
\hline
0 & 223 & 297 & 307 & 358 & 323 & 311 & 236 & 181 & 107 & 50 & 16 & 2 & 0 \\
1 & 86 & 129 & 135 & 164 & 144 & 145 & 102 & 82 & 42 & 24 & 6 & 0 & -\\
2 & 22 & 39 & 42 & 58 & 48 & 48 & 27 & 20 & 6 & 3 & - & - &  - \\
3  & 1 & 6 & 6 & 11 & 8 & 9 & 3 & 1 & - & - & - & - & - \\
4  & - & - & - & 1 & - & - & - & - & - & - & - & - & -\\
\end{tabular}
\end{center}

Similarly, for given $h$ and $\eps$, we determined the set
$\mathcal{S}(h,\eps)$ of all possible degree sequences of vertices on
a side of $H$, and the set $\mathcal{L}(h)$ of all possible degree sequence
of vertices on a line of $H$. We arbitrarily order sets
$\mathcal{S}(h,\eps)$ and $\mathcal{L}(h)$ to easily index their
elements.  Then for $t \in \{1,2, \dots, |\mathcal{S}(h,\eps)|\}$, a
\emph{side of type} $t$ in $\mathcal{S}(h,\eps)$ is a sequence of
non-negative integers $[s^t_2,s^t_3, \dots ,s^t_7,s^t_\eps]$, in which
$s^t_i$ denotes the number of vertices of degree $i\in[2,7]$,
such that the following conditions hold:
\begin{align*}
(i) &  \hspace{0.3cm} \sum_{i=2}^7 s^t_i = r+s^t_\eps && \mbox{which is the length of a side;  
 } \\
(ii) &  \hspace{0.3cm}  \sum_{i=2}^7 i s^t_i = h && \mbox{since every line meets every side; and  } \\
(iii) & \hspace{0.3cm}  s^t_\eps \leq \eps. 
\end{align*}

A \emph{line of type} $t$ in $\mathcal{L}(h)$, where $t \in \{1,2, \dots, 
|\mathcal{L}(h)|\}$, is a sequence of non-negative integers $[\ell^t_2, 
\ell^t_3,\dots, \ell^t_7]$, in which $\ell_i^t$ denotes the number of vertices 
of degree $i\in[2,7]$, such that the following 
conditions hold:
\begin{align*}
(i) & \hspace{0.3cm} \sum_{i=2}^7  \ell^t_i = r 
&& \mbox{since every line meets every side;} \\
(ii) & \hspace{0.3cm} \sum_{i=2}^7 i \ell^t_i = h+r-1 
&& \mbox{since } H \mbox{ is linear intersecting; and} \\
(iii)	     & \hspace{0.3cm} \ell^t_2\in\{0,1\}
&& \mbox{by \lref{l:tau_r_requirements}(iii). } 
\end{align*}

\subsubsection{Pairwise conditions} \label{sec:pairwise conditions}

Our next goal is to verify which degree sequences are feasible. We
assume that $h$ and $\eps$ are given, and that $D=[d_2,d_3, \dots, d_7]$ is
a degree sequence in $\mathcal{D}(h,\eps)$. Suppose that there exists~$H$,
a linear intersecting hypergraph with $r=\tau=9$, $\Delta=7$ on
$r^2+\eps$ vertices with $h$ lines and the given degree sequence. For
every $t \in \{1,2, \dots, |\mathcal{S}(h,\eps)|\}$, let $x_t$ denote
the number of sides of type $t$ that are contained in $H$.  For every
$t \in \{1,2,\dots, |\mathcal{L}(h)|\}$, let $y_t$ denote the number
of lines of type $t$ that are contained in $H$. Then $x_t$ and $y_t$
are non-negative integers which satisfy some obvious necessary
conditions listed by equations~\eqref{e:side condition 1}--\eqref{e:line condition 1}. Equation~\eqref{e:line condition 2}
is a double count of pairs of vertices of degree $i$ and lines on
which these vertices lie, for $i\in[2,7]$. Since $H$ is a linear
intersecting hypergraph, no pair of vertices is contained on two
lines.  Hence, the number of pairs of vertices contained on a line or
on a side cannot exceed the total number of pairs given by the degree
sequence. This condition is given by equations~\eqref{e:line condition
  3}--\eqref{e:line condition 5}, depending on whether we count pairs
of vertices having the same degree, or pairs of vertices with
different degree.  Putting all of these conditions together, we
formulate an integer program on variables $x_t$ and $y_t$.

\begin{align}
  & \sum_t x_t = r && \mbox{there are } r \mbox{ sides } \label{e:side 
condition 1} \\
  & \sum_t x_t s_\eps^t = \eps && \mbox{there are $\eps$ extra 
vertices}  \label{e:side 
condition 2} \\
  & \sum_t x_t s^t_i = d_i && \mbox{there are $d_i$ vertices of degree $i 
\in [2,7]$}  \label{e:side 
condition 3} \\ 
 & \sum_t y_t = h && \mbox{there are $h$ lines} \label{e:line condition 1} \\
 & \sum_t y_t \ell^t_i = i d_i && i \in 
[2,7] \label{e:line condition 2} \\
 & \sum_t x_t {s^t_i \choose 2} + \sum_t y_t {\ell^t_i \choose 2} \leq {d_i 
\choose 2} && i \in [2,6] 
\label{e:line condition 3}  \\
 & \sum_t x_t {s^t_7 \choose 2} + \sum_t y_t {\ell^t_7 \choose 2} = {d_7 
\choose 2} && \mbox{by \lref{l:deg7s diff sides}} \label{e:line 
condition 4}  \\
& \sum_t x_t s^t_{i} s^t_{j} + \sum_t y_t \ell^t_{i} 
\ell^t_{j} \leq d_{i} d_{j} && \mbox{distinct }i, j \in [2,7]
\label{e:line condition 5} 
\end{align}

This integer program has a feasible solution only for the following twelve 
degree sequences $D=[d_2,d_3,\dots, d_7]$. For all of these, $\eps = 0$.  
\[\begin{array}{lcl}
h= 45 \; \;  D= [22, 3, 7, 2, 15, 32], & \quad &
h= 46 \; \;  D= [23, 2, 1, 7, 13, 35], \\
h= 46 \; \;  D= [23, 1, 3, 7, 11, 36], & \quad &
h= 46 \; \;  D= [23, 0, 6, 4, 12, 36], \\
h= 46 \; \;  D= [23, 0, 5, 7, 9, 37], & \quad &
h= 46 \; \;  D= [22, 2, 5, 5, 10, 37], \\
h= 46 \; \;  D= [23, 0, 4, 10, 6, 38], & \quad &
h= 46 \; \;  D= [22, 1, 7, 5, 8, 38], \\
h= 46 \; \;  D= [22, 0, 10, 2, 9, 38], & \quad &
h= 47 \; \;  D= [23, 0, 3, 5, 10, 40], \\
h= 47 \; \;  D= [22, 1, 4, 6, 6, 42], & \quad &
h= 47 \; \;  D= [21, 2, 6, 4, 5, 43]. \\
\end{array}
\]

\subsubsection{Assignment of lines to vertices}

If a linear intersecting $9$-partite hypergraph $H$ with $\tau(H)=9$
exists, then it has $81$ vertices ($\eps=0$), $h \in \{45,46,47\}$,
and one of the twelve degree sequence listed at the end of
\sref{sec:pairwise conditions}. Let $D$ be one of these degree
sequences. Next we generated the set $\mathbf{S}(D)$ of all
possible non-negative integer solutions for the system of
equations~(\ref{e:side condition 1}),~(\ref{e:side condition
  2}),~and~(\ref{e:side condition 3}). Let $S=[x_1, x_2, \dots,
x_{|\mathcal{S}(h,0)|}]$ denote a particular solution in $\mathbf{S}(D)$.  We
formulate another system of linear equations which takes $D$ and
$S$ as input values.

Let $V=V(H)$ be the vertex set of $H$ which is partitioned into $9$
sides of equal size. Then the set of degree sequences of vertices in
the sides of $H$ corresponds to a solution $S \in \mathbf{S}(D)$.  We
change the notation slightly, to let $s^v_j$ denote the number of
vertices of degree $j$ in the side that contains vertex $v$.

As before, let $y_t$ be the number of lines of type $t$ from the set
$\mathcal{L}(h)$ present in $H$. Define $z_t^v$ to be the number
of lines of type $t$ incident with a vertex $v \in V$. If a line of
type $t$ has no vertices of degree $\deg(v)$ then
$z_t^v=0$. Otherwise, $z_t^v$ is a non-negative integer.  In addition
to equations \eref{e:line condition 1}--\eref{e:line condition 5}, the
following equations hold for $y_t$ and $z_t^v$, where $t \in \{1,2,
\dots, |\mathcal{L}(h)|\}$ and $v \in V$.


\begin{align}
&\quad\sum_t z_t^v = \deg(v) && \mbox{for all $v \in V$}; \label{e:line types to 
degs of v}\\
&\quad\sum_t z_t^v (\ell^t_i-1) \leq d_i - s^v_i && \mbox{for all $v \in V$ 
where $i=\deg(v)$}; \label{e:seen vertices of the same degree} \\
&\quad\sum_t z_t^v (\ell^t_7 -1 ) = d_7 - s^v_7 && \mbox{for all $v \in V$ 
such that $\deg(v)=7$;} \label{e:seen vertices of the same degree 7} \\
&\quad\sum_t z^v_t \ell^t_j \leq d_j - s^v_j && \mbox{for all $v \in V$ and 
all $j \in [2,7]$, $j \neq \deg(v)$;} \label{e:seen vertices of different 
degree} \\
&\quad\sum_{\kern-2em v \in V_k\kern-2em} z^v_t= y_t && \mbox{for all 
$t \in \{1,2,\dots, |\mathcal{L}(h)|\}$ and all $k \in [1,r]$;} 
\label{e:line types in sides}\\ 
&\quad\sum_{\kern-2em v, \deg(v)= i\kern-2em} z_t^v = y_t \ell^t_i && \mbox{for all $i \in 
[2,7]$ and $t \in \{1,2, \dots,|\mathcal{L}(h)|\}$.} \label{e:double 
count lines and degrees}
\end{align}

Note that since $S \in \mathbf{S}(D)$ is given, now values $x_t$ in
equations~(\ref{e:line condition 1})-(\ref{e:line condition 5}) are
constants. The total number of lines incident with a vertex equals the
degree of that vertex, which is given by \eref{e:line types to degs of v}. 
Two vertices are \emph{neighbours} if there is a line containing
 both of them.  Observe that, for each vertex $v$ in a given side, the
number of neighbours of $v$ which have degree $j$ is at most the total
number of degree $j$ vertices in the remaining sides, for $j \in
[2,7]$. Equations~(\ref{e:seen vertices of the same
degree})-(\ref{e:seen vertices of the same degree 7}) correspond to
counting the number of neighbours of $v$ which are of the same degree
as~$v$, whereas~(\ref{e:seen vertices of different degree}) counts the
neighbours of $v$ which have degree different from $\deg(v)$.  The
equality in~(\ref{e:seen vertices of the same degree 7}) is implied by
\lref{l:deg7s diff sides}. Since every line contains exactly one
vertex in each side, each side is incident with as many
lines of a type $t$ as there are lines of type $t$ present in the
hypergraph, which is given by~(\ref{e:line types in
  sides}). Finally,~(\ref{e:double count lines and degrees}) is a
double count of pairs of lines of type $t$ and vertices of degree $i$
incident with these lines.

We found a feasible solution for the integer program given 
by~\eqref{e:line condition 1}--\eqref{e:double count lines and degrees} only 
for  two pairs of input 
values of a degree sequence $D$ and $S \in \mathbf{S}(D)$ which 
we consider more closely in the following subsection.

\subsubsection{Remaining cases}

The two cases for which the integer program in the previous section
has a feasible solution both have $h = 46$.  Below we give the input
degree sequence $D$ and a matrix representation of $S$. Here, a column
of $S$ corresponds to a side, and each entry is the degree of a vertex
in that side. Each matrix $S$ is given uniquely, up to permutation of
sides and permutation of vertices within each side.
\begin{align*}
&\textrm{\textbf{Case 1}} 	&& \textrm{\textbf{Case 2}}\\
&  D = [23,1,3,7,11,36]   &&  D = [23,0,5,7,9,37]\\
& S = \left( \footnotesize  \begin{array}{rrrrrrrrr}
2 & 2 & 2 & 2 & 2 & 2 & 2 & 2 & 2 \\
2 & 2 & 2 & 2 & 2 & 2 & 2 & 2 & 2 \\
4 & 4 & 4 & 2 & 2 & 3 & 2 & 2 & 2 \\
5 & 5 & 5 & 6 & 6 & 5 & 5 & 5 & 5 \\
6 & 6 & 6 & 6 & 6 & 6 & 7 & 7 & 7 \\
6 & 6 & 6 & 7 & 7 & 7 & 7 & 7 & 7 \\
7 & 7 & 7 & 7 & 7 & 7 & 7 & 7 & 7 \\
7 & 7 & 7 & 7 & 7 & 7 & 7 & 7 & 7 \\
7 & 7 & 7 & 7 & 7 & 7 & 7 & 7 & 7
\end{array}\right) 
&& S = \left( \footnotesize \begin{array}{rrrrrrrrr}
2 & 2 & 2 & 2 & 2 & 2 & 2 & 2 & 2 \\
2 & 2 & 2 & 2 & 2 & 2 & 2 & 2 & 2 \\
4 & 5 & 2 & 2 & 4 & 4 & 2 & 2 & 2 \\
5 & 5 & 6 & 6 & 4 & 4 & 5 & 5 & 5 \\
6 & 5 & 6 & 6 & 6 & 6 & 7 & 7 & 7 \\
6 & 6 & 7 & 7 & 7 & 7 & 7 & 7 & 7 \\
7 & 7 & 7 & 7 & 7 & 7 & 7 & 7 & 7 \\
7 & 7 & 7 & 7 & 7 & 7 & 7 & 7 & 7 \\
7 & 7 & 7 & 7 & 7 & 7 & 7 & 7 & 7
\end{array}\right)
\end{align*}

\medskip
\noindent \textbf{Case 1:}  Suppose there exists a linear intersecting $9$-partite hypergraph $H$ with $\tau(H)=9$, $h=46$ lines and degree sequence $D = [23,1,3,7,11,36]$. First, we consider the 
set of types of lines in $H$.
Since $H$ has $46$ lines and $d_2=23$, every line in $H$ contains a 
vertex of degree $2$. 
Also, $\ell_i^t \leq d_i$ for all $i \in [3,7]$. Hence, the 
possible line types $\mathcal{L}$ in $H$ are
$$ \mathcal{L} = \{ [1,0,0,0,4,4], [1,0,0,1,2,5], [1,0,0,2,0,6], [1,0,1,0,1,6], 
[1,1,0,0,0,7]\}\subset \mathcal{L}(46).$$ 
Note that $H$ has one vertex $v$ such that $\deg(v)=3$. Let $e_1$ and
$e_2$ be two lines which contain $v$. Then $e_1$ and $e_2$ have the
same line type, namely $[1,1,0,0,0,7]$, since this is the only line
type in $\mathcal{L}$ which contains a vertex of degree $3$.  Let
$u_1$ and $u_2$ be the vertices of degree $2$ on $e_1$ and $e_2$,
respectively. If $u_1$ and $u_2$ belong to two different sides, then
let $w$ be the vertex of degree $7$ on $e_2$ which is on the same side
as $u_1$. By \lref{l:deg7s diff sides}, there is a distinct line through $w$ 
and each of the degree $7$ vertices on $e_1$. Hence, $\deg(w) \geq 8$,
which is a
contradiction. Therefore, $u_1$ and $u_2$ are on the same side in
$H$. Let $f$ be the line other than $e_1$ which contains $u_1$ and
let $w'$ be the vertex in which $e_2$ and $f$ intersect. Then
$\deg(w')=7$. Again by \lref{l:deg7s diff sides}, there is a line
through $w'$ and every degree $7$ vertex on $e_1$ which is not on the
same side as $w'$. Hence, $\deg(w') \geq 2+6=8$, which is a
contradiction.  Therefore, such a hypergraph does not exist.

\medskip
\noindent \textbf{Case 2:} Suppose there exists a linear intersecting $9$-partite hypergraph $H$ with $\tau(H)=9$, $h=46$ lines and degree sequence $D=[23,0,5,7,9,37]$. Moreover, without loss of generality, assume
that the degrees of vertices in $V=V(H)$ are given by $S$ for a
suitable permutation of sides and permutation of vertices within each
side.  Let $\mathcal{L} \subseteq \mathcal{L}(46)$ be the set of all
possible line types which may occur in $H$. As in the previous case,
line types in $\mathcal{L}$ satisfy the obvious constraints on
degrees. Hence
$$\mathcal{L} = \{ [1,0,0,0,4,4], [1,0,0,1,2,5], [1,0,0,2,0,6], [1,0,1,0,1,6]\} 
.$$

For clarity, we index the line types in $\mathcal{L}$ by  $A$, 
$B$, $C$ and $D$, respectively. Then for $t \in \{A,B,C,D\}$,  $y_t$ is the 
number of lines of type $t$ in $H$ and  $(y_A, y_B, y_C, y_D)$
satisfies the equations~\eqref{e:line condition 1}--\eqref{e:line condition 
5}. In this case, many of these equations are dependent and it is enough to 
consider 
equations~\eqref{e:line condition 1},~\eqref{e:line condition 2} for $i \in 
\{4,5\}$, and~\eqref{e:line condition 4} to obtain the
unique solution $(y_A, y_B, y_C, y_D) = (1,15,10,20)$. 

Moreover, for each $t \in \{A,B,C,D\}$ and each $v \in V$, $z^v_t$ denotes the number of 
lines of type $t$ incident with the vertex $v$ and the set of all values $\{ z_t^v \; : \; v\in V, t 
\in \{A,B,C,D\} \}$ satisfies equations~\eqref{e:line types to 
degs of v}--\eqref{e:double count lines and degrees}. We restrict our attention 
to vertices of degree $7$ and compute all possible solutions to the system of 
equations given only by~\eqref{e:line types to degs of v},~\eqref{e:seen 
vertices of the same degree 7} and~\eqref{e:seen vertices of different degree} 
for $j \in [2,6]$. 
For each $0\le k\le 8$, the possible solutions $(z^v_A, z^v_B, z^v_C, z^v_D)$ 
for $v \in V_k$, where $\deg(v)=7$, are given in the table below.
\begin{center}
\begin{tabular}{c|c|c|c}
$V_0$ & $V_1$ & $V_2, V_3, V_4, V_5$ & $V_6, V_7, V_8$ \\
\hline
$(0,1,2,4)$ & $(0,1,1,5)$ & $(1,0,3,3)$ & $(1,1,2,3)$ \\
             &            & $(0,2,2,3)$ & $(0,3,1,3)$ \\
            
\end{tabular}
\end{center}

Let $e$ be a line of type $C$. Line $e$ intersects each of the 20 lines of type $D$ 
exactly once. Moreover, line $e$ intersects a line of type $D$ either in its vertex of 
degree $2$ or in one of its 6 vertices of degree $7$. Observe that vertices of degree $7$ in sides $V_0$ and $V_1$ are 
incident with $4$ and $5$ lines of type $D$, respectively; all other 
vertices of degree $7$ are always incident with exactly $3$ lines of type $D$.  Depending on whether $e$ 
intersects a line of type $D$ in its vertex of degree $2$ or 
not, it is easy to see that, in order for $e$ to meet $20$ lines of type $D$,  
$e$ either contains exactly one vertex of degree 
$7$ on side $V_0$ or exactly one vertex of degree $7$ on side $V_1$, 
but not both. Since $z_C^v = 2$ if $v\in V_0$ and $z_C^v=1$ if $v \in V_1$ 
when $\deg(v)=7$, there are at most $2 \cdot 3 + 1 \cdot 3 = 9$ lines of type 
$C$, which gives a contradiction.  Therefore, such a hypergraph does not exist.

We conclude that there does not exist a linear intersecting
$9$-partite hypergraph with covering number $9$ and $\Delta = 7$.
Together with Corollary~\ref{c:r<=8} we have shown:

\begin{theorem}\label{thm: r <= 9}
  For $2 \leq r \leq 9$ every linear intersecting $r$-partite
  hypergraph has covering number at most $r-1$.
\end{theorem}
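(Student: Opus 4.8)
The plan is to rule out the extremal case $\tau = r$, since for an intersecting hypergraph $\nu = 1$, so Ryser's bound $\tau \le (r-1)\nu$ reduces to $\tau \le r-1$, and the general bound $\tau \le r\nu = r$ means the only possible violation has $\tau = r$ exactly. For $r \le 8$ this is already settled by \cyref{c:r<=8}, so the entire burden falls on $r = 9$. I would therefore assume, for contradiction, that a linear intersecting $9$-partite hypergraph $H$ with $\tau(H) = 9$ exists. Combining \cyref{c:Delta<=7} (which forces $\Delta \ge 7$) with \lref{l:maxdegalmdisj} (which gives $\Delta \le r-2 = 7$) pins the maximum degree down to $\Delta = 7$ exactly. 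Every subsequent argument then concerns a $9$-partite hypergraph on $r^2 + \eps = 81 + \eps$ vertices with $\Delta = 7$.

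Next I would extract the numerical envelope for the pair $(h, \eps)$. The incidence-counting relations \eref{e:di}, \eref{e:idi}, \eref{e:iidi}, together with the quadratic constraint \eref{e:quadratic with eps} inherited from \tref{t:quadratic_arg}, supply an upper bound on $h$ that also forces $\eps \le 4$, while \lref{l:lower bound on h} supplies the lower bound $h \ge 39$. This confines $(h,\eps)$ to the small rectangle tabulated after \lref{l:lower bound on h}. The crucial structural input for everything that follows is \lref{l:deg7s diff sides}: any two degree-$7$ vertices lying on different sides must share a line. Because $H$ is linear and intersecting, this upgrades a counting inequality for pairs of degree-$7$ vertices into an equality, which is precisely what will later let me replace inequalities by equations in the feasibility tests.

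With the envelope fixed, the heart of the argument is elimination by enumeration. For each admissible $(h, \eps)$ I would generate the set $\mathcal{D}(h,\eps)$ of degree sequences $[d_2,\dots,d_7]$ consistent with \eref{e:di}, \eref{e:idi}, and (with equality, since $H$ is linear) \eref{e:iidi}; then classify the possible side types and line types by their own conservation laws. The key pruning device is the integer program in the variables $x_t$ and $y_t$ (numbers of sides and lines of each type) given by \eref{e:side condition 1}--\eref{e:line condition 5}: side and line counts, degree totals, and the linearity constraints that the pairs of equal-degree or distinct-degree vertices realised on sides and lines cannot exceed the total available, with \eref{e:line condition 4} upgraded to equality via \lref{l:deg7s diff sides}. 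Infeasibility of this program eliminates all but twelve degree sequences, all with $\eps = 0$ and $h \in \{45,46,47\}$. A second, finer integer program, the line-to-vertex assignment \eref{e:line types to degs of v}--\eref{e:double count lines and degrees} tracking the multiplicities $z_t^v$ of each line type at each vertex, then reduces these to just two cases, both with $h = 46$.

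The main obstacle, and the only part genuinely requiring a hand-crafted argument rather than a feasibility check, is disposing of these last two cases. Here I would invoke \lref{l:deg7s diff sides} once more, but locally. In the first case the unique degree-$3$ vertex forces two lines through it to share a line type, and tracing the degree-$7$ neighbours of a common-side vertex pushes some vertex's degree to at least $8$, contradicting $\Delta = 7$. In the second case I would fix the line-type census $(y_A,y_B,y_C,y_D) = (1,15,10,20)$ forced by \eref{e:line condition 1}, \eref{e:line condition 2} and \eref{e:line condition 4}, compute the admissible local profiles $(z^v_A,z^v_B,z^v_C,z^v_D)$ at each degree-$7$ vertex, and count, for a single line of type $C$, how many of the twenty type-$D$ lines it can meet; the arithmetic bounds the number of type-$C$ lines by nine, contradicting $y_C = 10$. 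These two contradictions close the $r = 9$ case, and with \cyref{c:r<=8} the theorem follows. I expect the enumeration steps (generating the degree-sequence, side-type and line-type sets and solving the two integer programs) to be unavoidable, so the honest assessment is that the proof is partly machine-assisted, with the conceptual difficulty concentrated entirely in the two final elimination arguments.
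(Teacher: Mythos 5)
Your proposal is correct and follows essentially the same route as the paper: reduce to $\tau=r$, settle $r\le 8$ via Corollary~\ref{c:r<=8}, pin $\Delta=7$ for $r=9$, bound $(h,\eps)$ using Lemma~\ref{l:lower bound on h} and \eqref{e:quadratic with eps}, prune with the two integer programs built on Lemma~\ref{l:deg7s diff sides}, and kill the two surviving $h=46$ cases by the same hand arguments (the degree-$3$ vertex forcing degree $\ge 8$, and the type-$C$ line count $\le 9$ against $y_C=10$). No meaningful differences from the paper's proof.
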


\section{Hypergraph constructions}\label{sec:constructions}

In this section we describe several hypergraph constructions that are
of interest.  First we define some additional notation.
Let $H$ be an $r$-partite hypergraph.  In each side $V_k$, we label
the vertices by $(k,0),(k,1),\dots,(k,|V_k|-1)$, which we
abbreviate to $0,1,\dots,|V_k|-1$ when the side is clear from context.
We say that a vertex $(k,l)\in V_k$ is at \emph{level} $l$ in side
$V_k$.  We denote a line $e$ in $H$ by $[l_0,l_1, l_2, \dots
l_{r-1}]$, where $e$ contains the vertex at level $l_k$ in side $V_k$,
where $k \in [0,r-1]$. When presenting a specific example, we omit the square
brackets and commas to make the notation cleaner. The \emph{cyclic
  $1$-shift} of a line $[l_0, l_1, l_2,\dots, l_{r-1}]$ is the line
$[l_{r-1}, l_0, l_1, \dots, l_{r-2}]$ and the \emph{cyclic $t$-shift}
of a line $e$, denoted $e^t$, is the line obtained from $e$ by
applying $t$ cyclic $1$-shifts.  In particular, $e^0=e=e^r$.  An
$r$-partite hypergraph is \emph{cyclic} if its automorphism group
contains a cyclic subgroup of order $r$ acting transitively on the
sides.  A cyclic $r$-partite hypergraph can be obtained by developing
a set of \emph{starter lines} by cyclic shifts.

\subsection{A  \boldmath counterexample to \cjref{conj:variation}}\label{sec:counterex}

In this subsection we give a counterexample to \cjref{conj:variation}. 

\begin{theorem}\label{thm:counterexABW} 
  For at least one value of $r$ there is an intersecting
  $r$-partite hypergraph $H$ such that each of its sides has size $r$
  and $e \setminus \{v\}$ is not a cover for any $e \in H$ and any $v
  \in e$.
\end{theorem}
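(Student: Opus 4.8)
The plan is to refute \cjref{conj:variation} by an explicit construction. The conjecture asserts a dichotomy, so a counterexample is an intersecting $r$-partite hypergraph $H$ in which every side has size at least $r$ and no set of the form $e\setminus\{v\}$ (with $e\in H$, $v\in e$) is a cover. My first task is to restate the second condition in a checkable form. Since $H$ is intersecting, a line $e'$ avoids $e\setminus\{v\}$ precisely when $e'\cap e=\{v\}$. Hence ``no $e\setminus\{v\}$ is a cover'' is equivalent to the requirement that \emph{for every line $e$ and every vertex $v\in e$ there is another line $e'$ with $e\cap e'=\{v\}$}; that is, each vertex of each line must be the sole meeting point of that line with some other line. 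I would aim for the smallest $r$ admitting such an $H$, targeting $r=13$, since dropping linearity provides extra freedom but the stringency of the condition suggests a moderately large $r$ is required.

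For the construction itself, I would build $H$ as a cyclic hypergraph in the sense of \sref{sec:constructions}: take $r$ sides, each with levels $0,1,\dots,r-1$, and develop a short list of starter lines under the cyclic $1$-shift. This design guarantees that each side has exactly $r$ vertices, provided every level is actually used, and it collapses the verification onto orbit representatives. Concretely, the cyclic automorphism acting transitively on the sides maps lines to lines, so it suffices to verify both properties for each starter line against the full developed set, rather than for every line of $H$.

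The key steps, in order, are: (i) select $r$ and the starter lines, most plausibly by a computer search over candidate level-patterns; (ii) check that $H$ is intersecting by confirming that each starter line meets every cyclic shift of every starter line in at least one vertex; (iii) confirm that every level appears in each side, so that each side has size exactly $r$, giving the first half of the counterexample; and (iv) for each starter line $e$ and each vertex $v\in e$, exhibit a witnessing line $e'$ with $e\cap e'=\{v\}$, which I would record in a small table indexed by the coordinate of $v$. By cyclic symmetry these finitely many checks establish the required property for all lines of $H$.

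The main obstacle is the existence step (i): the ``private meeting point'' requirement is demanding, since all $r$ vertices on every line must simultaneously serve as unique intersections, while the hypergraph stays intersecting and no side grows beyond size $r$. Balancing these competing constraints is what forces $r$ to be fairly large, and is the reason I expect the starter lines to emerge from a computer search rather than from a closed-form recipe. Once a valid list of starter lines is in hand, steps (ii)--(iv) reduce to finite, mechanical verifications whose number is kept small by the cyclic symmetry; presenting the example together with these checks constitutes the bulk of the argument.
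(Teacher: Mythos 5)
Your reformulation of the covering condition is correct: in an intersecting hypergraph, $e\setminus\{v\}$ fails to be a cover precisely when some line $e'\ne e$ satisfies $e'\cap e=\{v\}$, so the task is to make every vertex of every line a ``private'' meeting point. Your overall strategy also matches the paper's: the published proof takes $r=13$ and develops three starter lines cyclically, so that each side has exactly $13$ vertices and the checks reduce, by the cyclic symmetry, to comparing starter lines against shifts. One simplification you miss is that the paper's example is \emph{linear}: once linearity and the intersecting property are verified, every line $e'\ne e$ meets $e$ in exactly one vertex, so the number of lines meeting $e$ only at $v$ is $\deg(v)-1$, and the whole of your step (iv) collapses to the single observation $\delta(H)\ge 2$. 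No vertex-by-vertex witness table is needed.

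The genuine gap is that you never carry out step (i). The theorem is an existence statement, and its entire content is the explicit example; a proposal to find suitable starter lines ``by a computer search over candidate level-patterns'' is a research plan, not a proof. You yourself identify the existence step as ``the main obstacle,'' and indeed it is the only nontrivial part: the paper must work to show that its three starter lines (built around a $4$-extended Skolem sequence of order $6$, with a table of pairwise level-distances ruling out double intersections between shifts of distinct starters) actually yield a linear intersecting hypergraph. Without exhibiting concrete starter lines and performing the finite verification on them, the argument establishes nothing. As written, the proposal correctly describes the shape of a proof but does not constitute one.
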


\begin{proof}
  Let $r=13$.
  We give an example of a cyclic linear intersecting $r$-partite
  hypergraph $H$ in which every side has size $r$. The $3r$
  lines of $H$ are obtained by taking all possible cyclic shifts of
  the following three starter lines.
\begin{equation}\label{e:e1e2e3}
\begin{array}{rccccccccccccc}
e_1 =& 0 & \mathbf{1} & 4 & \mathbf{2} & \mathbf{3} & 5 & 6 & 6 & 5 & 
\mathbf{3} & \mathbf{2} & 4 & \mathbf{1} \\
e_2 =& 0 & \mathbf{3} & 7 & \mathbf{1} & \mathbf{2} & 8 & 9 & 9 & 8 & 
\mathbf{2} & \mathbf{1} & 7 & \mathbf{3} \\
e_3 =& 0 & \mathbf{2} & 10 & \mathbf{3} & \mathbf{1} & 11 & 12 & 12 & 11 & 
\mathbf{1} & \mathbf{3} & 10 & \mathbf{2}
\end{array}
\end{equation}
Vertices at level $0$ in $H$ have degree $3$, vertices at levels $1$,
$2$ and $3$ have degree $6$, and all other vertices have degree $2$.

We claim that $H$ is a linear intersecting hypergraph. Observe
that $|e_i^{t_1} \cap e_j^{t_2}|=|e_i \cap e_j^{t_2-t_1}|$ 
for any $i,j \in \{1,2,3\}$ and $0\le t_1 \leq t_2< r$. Hence, it
suffices to show that for any $i,j \in \{1,2,3\}$ and $0\le t<r$
where $(j,t) \neq (i,0)$, lines $e_i$ and $e_j^t$ intersect in exactly one
vertex.

First we consider the case when $i=j$.  By construction, $e_i^3$ has
the underlying structure of a 4-extended Skolem sequence of order $6$,
namely 6420246531135.  For example, this sequence is obtained from
$e_1^3$ by relabelling the levels using the permutation
$(1,2,6)(3,5)$.  For definitions and background on Skolem
sequences, see~\cite{skolem_survey}.  In our case, the result is that
for every $t \in [1,\frac{r-1}{2}]$, there is a unique pair $(k,l)$
and $(k',l)$ of vertices in $e_i$ with $k'-k\equiv t\mod r$. Hence
$e_i\cap e_i^t=\{(k',l)\}$ and $e_i\cap e_i^{r-t}=\{(k,l)\}$.

Now assume that $i \neq j$. Suppose that there are two 
distinct vertices $(k_1,l_1)$ and $(k_2,l_2)$ in $e_i\cap e_j^t$.
Since $e_i\cap e_j=\{(0,0)\}$, we may assume that $t\in[1,r-1]$.
Then by inspection, we must have $l_1,l_2\in\{1,2,3\}$ (the relevant
entries are shown in {\bf bold} in \eref{e:e1e2e3}).
For the different possible pairs $(l_1,l_2)$ the following table shows
the feasible distances $k_2-k_1\mod r$. 
\[
\begin{array}{c|| c | c | c} 
 \hspace{0.5cm} l_1, l_2 \hspace{0.5cm} & \textrm{distances in $e_1$} & 
\textrm{distances 
in $e_2$} & \textrm{distances in $e_3$}\\ \hline 
 1,1 & \pm2& \pm6& \pm5\\
 2,2 & \pm6& \pm5& \pm2\\
 3,3 & \pm5& \pm2& \pm6\\
 1, 2 & \pm 2, \pm 4 &  \pm 1, \pm 6 &  \pm 3, \pm 5  \\
 1, 3 & \pm 3, \pm 5 &  \pm 2, \pm 4 &  \pm 1, \pm 6 \\
 2, 3 & \pm 1, \pm 6 &  \pm 3, \pm 5 &  \pm 2, \pm 4 \\ 
\end{array}
\]
Since there is no row of the table where the same distance occurs in
different columns, we conclude that 
\begin{equation}\label{e:eiej}
|e_i\cap e_j^t|\le 1.
\end{equation}
Since $e_i$ and $e_j$ intersect in the vertex $(0,0)$, 
and both have a pair of vertices on each level $l \in
\{1,2,3\}$, there are $1 + 3 \cdot 4 = 13$ vertices in which $e_i$
intersects the set of all cyclic shifts of $e_j$.  It follows
that we must have equality in \eref{e:eiej} for each $i,j,t$.
Thus $H$ is a linear intersecting hypergraph.
Since $\delta(H)=2$, it follows that for
every line $e \in H$ and each vertex $v \in e$, the set of vertices 
$e\setminus \{v\}$ is not a cover.
\end{proof}

For the linear intersecting $13$-partite hypergraph constructed in the proof of 
\tref{thm:counterexABW} we found by computation that $\tau(H)=9$.
The following  set of vertices is a 
$9$ cover for $H$: $$\{ (0,1), (0,2), (0,3), 
(2,0), (5,0), (8,0), (10,1), (10,2), (10,3)\}.$$ 

\subsection{Cyclic \boldmath intersecting hypergraphs 
with $\tau = r-1$ 
}\label{sec:non_prime_power}

Next we give an example of a cyclic intersecting $r$-partite
hypergraph which has covering number $r-1$ for $r \in
\{9,13,17\}$. The methodology for building these hypergraphs is
similar to the construction given in the proof of
\tref{thm:counterexABW}. However, in each case below the hypergraph we
construct is non-linear.

\begin{lemma}\label{l:f(r)}
Let $(r,s) \in \{(9,4), (13,5), (17,6)\}$. Then there exists a cyclic 
intersecting $r$-partite hypergraph $H$ such that $H$ has $sr$ lines and 
$\tau(H)=r-1$.  
\end{lemma}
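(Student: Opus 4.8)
The plan is to follow the template established in the proof of \tref{thm:counterexABW}: for each pair $(r,s)$ I would exhibit $s$ explicit starter lines and develop them under the cyclic $r$-shift to obtain a cyclic $r$-partite hypergraph $H$ with exactly $sr$ lines. The starter lines should be chosen so that level $0$ vertices have degree $s$ and the other vertices have degree $2$ (or some prescribed small degrees), mirroring the symmetric palindromic structure seen in \eref{e:e1e2e3}. Because the construction is cyclic, the key identity $|e_i^{t_1}\cap e_j^{t_2}|=|e_i\cap e_j^{t_2-t_1}|$ reduces every intersection check to the finite collection of pairs $(i,j,t)$ with $i,j\in[1,s]$ and $0\le t<r$, so establishing that $H$ is intersecting is a bounded computation.

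First I would verify that $H$ is intersecting, i.e.\ that $e_i\cap e_j^t\neq\emptyset$ for all admissible $i,j,t$. For the same-index case $i=j$, I expect each starter to encode (after relabelling levels) a Skolem-type sequence, so that each nonzero shift distance $t$ is realized by exactly one coinciding level; this simultaneously controls the self-intersections. For the cross-index case $i\neq j$, I would tabulate, for each ordered pair of levels, the shift distances at which a coincidence occurs, exactly as in the table of \tref{thm:counterexABW}. Since the target hypergraph is now \emph{non-linear}, I do \emph{not} need disjointness of these distances; rather I must check that every shift value $t$ is covered by at least one coincidence, guaranteeing nonempty intersection. A counting argument on the total number of vertex-coincidences between $e_i$ and all shifts of $e_j$ will confirm coverage.

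Second, I would establish $\tau(H)=r-1$. The upper bound $\tau(H)\le r-1$ I would get by exhibiting an explicit cover of size $r-1$ in each case, analogous to the explicit $9$-cover given for the $r=13$ counterexample; the cyclic symmetry should suggest a natural candidate cover (for instance the degree-$s$ vertices at level $0$ on a subset of sides, possibly augmented by a few others). For the lower bound $\tau(H)\ge r-1$, I would argue that no $(r-2)$-set of vertices can meet all $sr$ lines, using a degree/counting argument: the total degree covered by any $r-2$ vertices is at most $(r-2)\Delta$, which must be compared against the $sr$ lines, with care taken because lines can share covered vertices. In practice the cleanest route may be to verify the lower bound computationally, as the authors did for the $13$-partite example where they report $\tau(H)=9$ by computation.

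The main obstacle will be the lower bound $\tau(H)\ge r-1$. Exhibiting a small cover and checking the intersecting property are both finite verifications driven by the explicit starter lines, but proving that \emph{no} cover of size $r-2$ exists is genuinely global and does not follow from local degree bounds alone. For $r\le9$ one could appeal to \tref{thm: r <= 9} or earlier structural lemmas, but those apply to \emph{linear} hypergraphs and these examples are non-linear, so that route is unavailable. I therefore anticipate that the honest proof either supplies a tailored counting/extremal argument ruling out $(r-2)$-covers for each of the three cases, or---more likely given the paper's stated reliance on computation---verifies $\tau(H)=r-1$ by an exhaustive search over candidate covers, with the three starter-line sets presented explicitly and the verification deferred to the ancillary source code.
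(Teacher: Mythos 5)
Your proposal matches the paper's approach: the published proof simply exhibits the $s$ explicit starter lines for each of the three cases, develops them cyclically to obtain the $sr$ lines, and then states that the covering numbers $8$, $12$ and $16$ were verified by computation, exactly as you anticipated in your final paragraph. The only difference is that the paper does not bother with the Skolem-sequence or distance-table analysis for the intersecting property (that too is left to the computation), so your plan is, if anything, more detailed than what is actually printed.
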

\begin{proof}
  The $sr$ lines of $H$ are obtained by taking all possible cyclic
  shifts of lines in the starters given below.
\begin{align*}
 \begin{array}{lccccccccc}
\multicolumn{10}{l}{r=9} \\ \hline \hline 
e_1 = & 4 & 3 & 2 & 1 & 0 & 1 & 2 & 3 & 4 \\
e_2 = &3 & 6 & 5 & 4 & 0 & 4 & 5 & 6 & 3 \\
e_3 = &1 & 2 & 4 & 6 & 0 & 6 & 4 & 2 & 1 \\
e_4 = &3 & 0 & 2 & 6 & 7 & 6 & 2 & 0 & 3 \\
&  &  &  &  &  &  &  &  &  \\
\end{array}
&&
\begin{array}{lccccccccccccc}
\multicolumn{14}{l}{r=13} \\ \hline \hline 
e_1 =& 6 & 5 & 4 & 3 & 2 & 1 & 0 & 1 & 2 & 3 & 4 & 5 & 6 \\
e_2 =& 9 & 6 & 1 & 8 & 7 & 5 & 0 & 5 & 7 & 8 & 1 & 6 & 9 \\
e_3 =& 7 & 2 & 9 & 1 & 3 & 10 & 0 & 10 & 3 & 1 & 9 & 2 & 7 \\
e_4 =& 5 & 3 & 2 & 6 & 9 & 7 & 0 & 7 & 9 & 6 & 2 & 3 & 5 \\
e_5 =& 9 & 5 & 2 & 6 & 3 & 7 & 11 & 7 & 3 & 6 & 2 & 5 & 9\\
\end{array}
\end{align*}
 
\[
 \begin{array}{lccccccccccccccccc}
 \multicolumn{18}{l}{r=17} \\ \hline \hline 
e_1 = & 3 & 12 & 11 & 4 & 1 & 10 & 2 & 9 & 0 & 9 & 2 & 10 & 1 & 4 & 11 & 12 & 3 
 \\
e_2 = & 1 & 8 & 7 & 6 & 5 & 4 & 3 & 2 & 0 & 2 & 3 & 4 & 5 & 6 & 7 & 8 & 1  \\
e_3 = & 14 & 3 & 9 & 7 & 15 & 11 & 6 & 4 & 0 & 4 & 6 & 11 & 15 & 7 & 9 & 3 & 
14 \\
e_4 = & 6 & 2 & 5 & 11 & 14 & 12 & 4 & 10 & 0 & 10 & 4 & 12 & 14 & 11 & 5 & 2 & 
6\\
e_5 = & 11 & 10 & 14 & 1 & 7 & 3 & 12 & 8 & 0 & 8 & 12 & 3 & 7 & 1 & 14 & 10 & 
11 \\
e_6 = & 7 & 4 & 13 & 5 & 9 & 1 & 11 & 12 & 0 & 12 & 11 & 1 & 9 & 5 & 13 & 4 & 7
\\
\end{array}
\]
We found by computation that the covering numbers for these three hypergraphs 
are $8$, $12$ and $16$, respectively.
\end{proof}

\begin{remark}
The hypergraphs with $r=9$, $r=13$ and $r=17$ in \lref{l:f(r)}
are intrinsically non-linear; each has the property that it does not
contain a linear subhypergraph with covering number $r-1$.

First, if $H$ is the hypergraph with $r=9$, then 
$|e_2 \cap e_4| = 2$, but deleting either $e_2$ or $e_4$ reduces the covering 
number. Below are covers of size $7$ for $H \backslash \{e_2\}$ and $H 
\backslash \{e_4\}$, respectively.
\begin{align*}
& \{( 5 , 6 ), ( 6 , 0 ), ( 6 , 1 ), ( 6 , 2 ), ( 6 , 3 ), ( 6 , 4 ), ( 6 , 6 ) \} \\
& \{( 4 , 0 ), ( 4 , 1 ), ( 4 , 2 ), ( 4 , 3 ), ( 4 , 4 ), ( 4 , 5 ), ( 4 , 6 )\}
\end{align*}

Similarly, if $H$ is the hypergraph with $r=13$, then 
$|e_1 \cap e_5| = 2$ and below are covers of size 
$11$ for $H \backslash \{e_1\}$ and $H \backslash \{e_5\}$, respectively.
\begin{align*}
& \{( 0 , 7 ), ( 1 , 3 ), ( 2 , 3 ), ( 3 , 7 ), ( 8 , 0 ), ( 8 , 1 ), ( 8 , 3 ), (8 , 5 ), ( 8 , 6 ), ( 8 , 7 ), ( 12 , 2 )\} \\
& \{( 1 , 9 ), ( 4 , 6 ), ( 6 , 0 ), ( 6 , 1 ), ( 6 , 2 ), ( 6 , 3 ), ( 6 , 6 ), (6 , 7 ), ( 6 , 9 ), ( 8 , 6 ), ( 11 , 9 )\}
\end{align*}
\end{remark}

Finally, consider the hypergraph $H$ in Lemma~\ref{l:f(r)} with $r=17$. Let 
$H_1$ be the subhypergraph obtained from all of the cyclic shifts of starter 
lines $e_2, e_3, e_4, e_5, e_6$.  Then $H_1$ is a 
linear intersecting hypergraph with covering number 15, where the following set of vertices is a minimal cover:
\begin{align*}
\{ & ( 4 , 5 ), ( 4 , 8 ), ( 4 , 11 ), ( 5 , 4 ), ( 5 , 7 ), ( 10 , 0 ), ( 10 , 4 ), ( 10 , 6 ),\\
& ( 10 , 7 ), ( 10 , 14 ), ( 15 , 4 ), ( 15 , 7 ), ( 16 , 5 ), ( 16 , 8 ), ( 16 , 11 )\}. 
\end{align*}
Let $E_1$ be the set of all cyclic shifts of the starter line $e_1$.
Observe that $E_1$ is linear intersecting.  Furthermore, each line
$e\in E_1$ intersects exactly 12 lines of $H_1$ more than once.  For
every non-empty subset $E^* \subseteq E_1$, define $H_1^*$ to be the
hypergraph obtained from $H_1$ by adding the lines of $E^*$ and
removing every line of $H_1$ that intersects a line of $E^*$ more than
once.  We computationally checked that each of the $2^{17}-1$ such
linear intersecting hypergraphs $H_1^*$ has covering number less than
16 (in fact, they each have covering number between 10 and 14).
Therefore, $H$ has no linear intersecting subhypergraph with $\tau=16$.

\subsection{Linear intersecting hypergraphs built from latin squares}\label{sec:mols}

Next we describe a family of linear intersecting $r$-partite hypergraphs which do not have a minimal cover that consists of a single side or that consists of a subset of a line.

A \emph{latin square} of order $n$ is an $n\times n$ array of $n$ symbols such 
that each symbol appears exactly once in each row and exactly once in each 
column.  If $L$ is a latin square, $L[r,c]$ denotes the symbol in row $r$ and 
column $c$ of $L$.  A pair of latin squares of order $n$ are \emph{orthogonal} 
if, when the two squares are superimposed, each of the $n^2$ possible ordered 
pairs of symbols occurs exactly once.  A set of latin squares is called 
\emph{mutually orthogonal} if each pair of latin squares in the set are 
orthogonal. For more on latin squares, see~\cite{vLW01}.

\begin{lemma}\label{l:fromMOLS_2extra_sides} 
  If there exist $k$ mutually orthogonal latin squares of order
  $n\ge3$, then there exists a linear intersecting $(n+2)$-partite
  hypergraph $H$ with $\tau(H) = k+1$ such that no side or subset of a
  line is a minimal cover.
\end{lemma}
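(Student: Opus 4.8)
The plan is to build $H$ explicitly from the given $k$ MOLS of order $n$, using the standard ``net'' or ``transversal design'' construction. First I would set up $n+2$ sides as follows: two special sides $V_0$ and $V_1$ will each have exactly $n$ vertices, indexed by $[0,n-1]$ (think of these as the row-index side and the column-index side); the remaining $k$ sides $V_2,\dots,V_{k+1}$ will be the ``symbol sides,'' one for each of the $k$ MOLS, each also having $n$ vertices indexed by the $n$ symbols. For every cell $(x,y)\in[0,n-1]^2$ of the array, I would create one line
\[
\bigl[x,\,y,\,L_1[x,y],\,L_2[x,y],\,\dots,\,L_k[x,y]\bigr],
\]
so that $H$ has exactly $n^2$ lines. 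The vertex at level $x$ in $V_0$ records the row, the vertex at level $y$ in $V_1$ records the column, and the vertex at level $L_m[x,y]$ in $V_{m+1}$ records the symbol of the $m$th square in that cell.

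\textbf{Verifying the hypergraph properties.} Next I would check that $H$ is linear and intersecting. Two distinct lines come from two distinct cells $(x,y)\ne(x',y')$. If they agreed on side $V_0$ (so $x=x'$) and on side $V_1$ (so $y=y'$) they would be the same cell, a contradiction, so they meet on at most one of those two sides. If they agreed on a symbol side $V_{m+1}$ and also on, say, $V_0$ (same row $x=x'$), that would force $y=y'$ since a symbol appears once per row, again a contradiction; the orthogonality of any two squares $L_m,L_{m'}$ guarantees that two distinct cells cannot agree on both $V_{m+1}$ and $V_{m'+1}$. A short case analysis of this type shows any two lines meet in at most one vertex, giving linearity; I would then observe that any two cells in the same row, same column, or sharing a symbol of some square do meet, and by a counting/parameter argument (or simply by augmenting the construction so that every pair of cells is linked) confirm the intersecting property --- this is the point I would need to handle carefully, since two cells in different rows, different columns, and different symbols for all $k$ squares need not meet. \textbf{The main obstacle} is precisely ensuring $H$ is \emph{intersecting}: with only $k\le n-1$ symbol sides, generic pairs of cells can be disjoint, so I expect the actual construction to use either $V_0,V_1$ plus enough symbol sides that two cells always coincide somewhere, or (more likely, given the ``$+2$'' in the statement) a modified indexing where the two extra sides are engineered to force every pair of lines to meet. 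Resolving this will likely drive the precise form of the two special sides.

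\textbf{Computing $\tau$ and the cover structure.} Once $H$ is known to be linear intersecting, I would establish $\tau(H)=k+1$ in two directions. For the upper bound, I would exhibit an explicit cover of size $k+1$: the most natural candidate is to fix a single symbol value $s$ and take, from each of the $k+1$ ``coordinate'' sides that can detect it, one well-chosen vertex --- for instance a vertex of $V_0$ together with one vertex from each symbol side chosen so that every cell is hit either by its row or by one of its $k$ symbols. For the lower bound $\tau(H)\ge k+1$, I would argue that no $k$ vertices can cover all $n^2$ lines, using that each vertex has degree exactly $n$ (it lies in a full row, column, or symbol-class of $n$ cells) so $k$ vertices cover at most $kn<n^2$ lines when $n\ge 3$ --- or, more sharply, a double-counting argument exploiting linearity. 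Finally, to prove \emph{no side and no subset of a line is a minimal cover}, I would note that each side has $n>k+1$ vertices and removing any vertex from a side still leaves $n-1\ge k+1$ vertices, while $\tau=k+1$, so a full side is never minimal; and for lines, since every vertex has degree $n\ge3\ge2$, removing any single vertex from a line leaves a line (which fails to cover the lines through the deleted vertex), so no subset of a single line can be a cover at all, let alone a minimal one. This last part parallels the $\delta(H)\ge2$ argument used at the end of the proof of \tref{thm:counterexABW} and should go through routinely.
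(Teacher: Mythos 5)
Your construction is the standard transversal design with one line per cell of the $n\times n$ array, and you correctly identify its fatal flaw yourself: it is not intersecting. Two cells in different rows and different columns that disagree on every one of the $k$ symbols give disjoint lines, and such pairs exist whenever $k<n-1$ (a cell is collinear with only $(k+2)(n-1)$ of the other $n^2-1$ cells, so only the complete case $k=n-1$ escapes). You flag this as ``the main obstacle'' and hope that the two extra sides can be ``engineered'' to force intersection, but you never supply that engineering, so the proof does not go through. A second, related problem is that your hypergraph is $(k+2)$-partite (rows, columns, and $k$ symbol sides), not $(n+2)$-partite as the statement requires; the ``$+2$'' in the statement is not rows-plus-columns.

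The paper's construction is essentially the dual of yours. It normalizes each $L_i$ to have first row $[0,1,\dots,n-1]$, passes to the column inverses $M_i$, and makes each \emph{row} of each (relabelled) $M_i'$ into a line: the first $n$ coordinates are the entries of that row, so side $V_c$ for $c\in[0,n-1]$ records the symbol in column $c$, while the two extra coordinates record which square the row came from (side $V_n$) and which row it is (side $V_{n+1}$). Two rows of the same square never agree in any column by the latin property, but they share the square index on $V_n$; two rows of different squares agree in exactly one column by orthogonality, except in the degenerate case where the common cell lies in row $0$, which is handled by relabelling the diagonal symbol $0$ differently in each $M_i'$ so that those lines meet on side $V_{n+1}$ instead. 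A further family $E^*$ of $n-1$ constant lines is added, giving $kn+n-1$ lines with maximum degree $n$, whence no $k$ vertices suffice, and an explicit $(k+1)$-cover sitting on $V_n$ and $V_{n+1}$. Note also that in the actual construction each line contains a vertex of degree $1$, which is what rules out covers inside a line; your assertion that every vertex has degree $n$ is an artifact of your (non-intersecting) construction and would not survive the needed repair.
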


\begin{proof}
Let $L_0, \dots, L_{k-1}$ be $k$ mutually orthogonal latin squares of order $n$.  Assume that each $L_i$  has $[0,1,\dots, n-1]$ as its first row.  For $i = 0,1,\dots, k-1$, let $M_i$ be the column inverse of $L_i$, that is, $M_i[r,c] = s$ if and only if $L_i[s,c] = r$.
Since each $L_i$ has its first row in reduced form, each $M_i$ has symbol $0$ on the main diagonal.  Set $M_0' = M_0$ and for $i = 1, \dots, k-1$ let $M_i'$ be the latin square obtained from $M_i$ by replacing symbol $0$ with symbol $n+i-1$.

Next we define the lines of an $(n+2)$-partite hypergraph $H$.  Let
$V_0, \dots, V_{n+1}$ be the sides of $H$.  Sides $V_0, \dots, V_{n}$
each have $n+k-1$ vertices and side $V_{n+1}$ has $n+1$ vertices.
If a line $[l_0, l_1, \dots, l_{r-1}]$ is a concatenation of two
lists, we use the notation $[l_0, \dots, l_i] \oplus [l_{i+1}, \dots,
l_{r-1}]$.

For $i = 0, \dots, k-1$ define 
$E_i := \{  M_i'[x] \oplus [i, x] : 0\le x\le n-1 \}$, 
where $M_i'[x]$ is row $x$ of the latin square $M_i'$.
Further, define 
$E^* := \{ [x,x,\dots, x, k+x-1, n]: 1\le x\le n-1\} $.

It is straightforward to check that each line in $E_i$ intersects each
line in $E^*$ exactly once, and that the lines in $E^*$ meet each
other exactly once.  Since $M_i'$ is a latin square, two distinct
lines in $E_i$ intersect only at the vertex $(n,i)$.  We next show
that lines of $E_i$ and $E_j$ also intersect linearly for $i \neq j$.
Consider a line $\ell_1 = M_i'[x] \oplus [i,x] \in E_i$ and a line
$\ell_2 = M_j'[y] \oplus [j,y] \in E_j$ where $i \neq j$.  Since $L_i$
and $L_j$ are orthogonal latin squares, there is a unique cell $(r,c)$
where $L_i[r,c] = x$ and $L_j[r,c] = y$.  If $x=y$, then $r=0$ and
$M_i[x,x]=M_j[x,x]=0$, and thus, by the relabelling of symbol $0$ in
$M_1', \dots, M_{k-1}'$, the lines $\ell_1$ and $\ell_2$ intersect
only at the vertex $({n+1},x)$.  If $x \neq y$, then $r \neq 0$ and
thus $M_i'[x,c]=r=M_j'[y,c]$ and therefore the lines $\ell_1$ and
$\ell_2$ intersect only at the vertex $(c,r)$.  It follows that $H =
E^* \cup E_0 \cup E_1 \cup \dots \cup E_{k-1}$ is a linear
intersecting $(n+2)$-partite hypergraph with $kn + n-1$ lines.

Since $H$ has maximum degree $n$ and $kn+n-1$ lines, no set of $k$
vertices is a cover.  It is straightforward to check that the vertices
at levels $0,1,\dots,k-1$ of side $V_{n}$ together with the vertex
$({n+1},n)$ form a cover of size $k+1$.  Therefore $\tau(H) = k+1$.

It is well-known that there are at most $n-1$ mutually orthogonal
latin squares of order $n$ (see e.g.~\cite{vLW01}), so $k+1 \leq n$. 
Observe that for $k\ge2$ each side has size at least $n+1$, whereas 
$\tau(H)=2<n$ when $k=1$. Hence,
no side is a minimal cover.  Also, since $H$ is linear and each line
contains exactly one vertex of degree $1$, it follows that the only
covers which are subsets of a line have size at least $n+1$.  Thus, no
side or subset of a line is a minimal cover of $H$.
\end{proof}

The following corollary follows immediately from the existence of complete sets 
of mutually orthogonal latin squares of prime power orders~\cite{vLW01}.

\begin{corollary}
If $n$ is a prime power then there exists a linear intersecting $(n+2)$-partite hypergraph $H$ with $\tau(H) = n$ such that no side or subset of a line is a minimal cover.
\end{corollary}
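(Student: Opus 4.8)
The plan is to apply \lref{l:fromMOLS_2extra_sides} with the largest admissible value of $k$. The single external ingredient is the classical fact that whenever $n$ is a prime power there exists a \emph{complete} set of $n-1$ mutually orthogonal latin squares of order $n$; this is the maximum possible, since there are at most $n-1$ MOLS of any order. Concretely, identifying the symbol set with the finite field of order $n$, for each nonzero field element $a$ one takes the square $L_a$ defined by $L_a[r,c] = a\cdot r + c$. Each $L_a$ is latin (fixing a row gives a permutation in $c$, and $a\neq 0$ makes fixing a column a permutation in $r$), and for $a\neq b$ the pair $L_a,L_b$ is orthogonal because the linear system $ar+c=u$, $br+c=v$ has the unique solution $r=(u-v)/(a-b)$, $c=u-ar$. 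These $n-1$ squares have first row $[0,1,\dots,n-1]$, matching the normalisation used in the lemma.

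With this in hand, I would simply set $k=n-1$, which is legitimate precisely because a complete set of MOLS exists for prime power $n$. Feeding these $k$ squares into \lref{l:fromMOLS_2extra_sides} produces a linear intersecting $(n+2)$-partite hypergraph $H$, and reading off the conclusion of the lemma with this value of $k$ yields $\tau(H)=k+1=(n-1)+1=n$, together with the asserted property that no side and no subset of a line constitutes a minimal cover. Thus the corollary is an immediate specialisation of the lemma.

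Since the statement is a direct consequence of a result already proved, there is essentially no genuine obstacle: the only things to verify are the arithmetic $k+1=n$ and the availability of $n-1$ MOLS. The one boundary case worth flagging is that the lemma carries the hypothesis $n\ge 3$, whereas the degenerate prime power $n=2$ admits only a single latin square ($k=1$) and falls outside that hypothesis; for every prime power $n\ge 3$, however, the complete set of MOLS exists and the argument goes through verbatim.
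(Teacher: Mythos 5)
Your proposal is correct and is exactly the paper's argument: the corollary is obtained by feeding a complete set of $k=n-1$ mutually orthogonal latin squares of order $n$ (which exist for prime power $n$) into \lref{l:fromMOLS_2extra_sides}, giving $\tau(H)=k+1=n$. Your extra details (the explicit field construction $L_a[r,c]=ar+c$ and the remark that $n=2$ falls outside the lemma's hypothesis $n\ge3$) are sound but not needed beyond what the paper cites.
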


Example: Below are the lines of a hypergraph built from $3$ mutually 
orthogonal latin squares of order $4$, as described in 
\lref{l:fromMOLS_2extra_sides}.  This hypergraph is also maximal with respect to the property of being linear and intersecting.
\[\begin{array}{llll}
031200 & \quad 423110 & \quad 512320 & \quad 111134\\
302101 & \quad 241311 & \quad 153221 & \quad 222244\\
120302 & \quad 314212 & \quad 235122 & \quad 333354\\
213003 & \quad 132413 & \quad 321523 & \quad \\
\end{array}\]

\subsection{An \boldmath $8$-partite linear hypergraph with $\tau=7$}\label{s:H38}

We close this section by giving a construction for an interesting
$8$-partite linear hypergraph $\HH$ that achieves equality in
\cjref{conj:Ryser}.

Let $F$ denote the Fano plane constructed by developing the triple
$\{0,1,3\}$ modulo $7$. Let $G$ denote the stabiliser of the point
$0$ in $F$. Note that $|G|=24$.
Let $\C$ denote the set of $7$-cycles obtained by conjugating
the cycle $(0123456)$ by elements of $G$. 

For each permutation $p\in\C$ we add one line to $\HH$ which includes
the vertex $(i,p[i])$ for $i\in[0,6]$.
If $p_1,p_2\in\C$ then $p_1^{-1}p_2$ has at most one fixed point.
We can make the lines corresponding to $p_1$ and $p_2$ meet on
side $V_7$ if and only if $p_1^{-1}p_2$ has no fixed points.
This produces $8$ vertices of degree $3$ in $V_7$, and completes
the description of the lines corresponding to the cycles in $\C$.
Next we add two new vertices $v_1,v_2$ to $V_7$.
For each $i\in[0,6]$ we put a line through $v_1$ and the vertices 
$(j,i)$ for $j\in[0,6]$. For each
$i\in[0,6]$ we put a line through $v_2$, $(i,i)$ and all vertices 
$(a,b)$ for which $\{i,a,b\}$ is a triple of $F$.

The construction just described results in $\HH$, which has $38$ lines
and an automorphism group isomorphic to PSL$(2,7)$. For $i\in[0,6]$
the vertex $(i,i)$ has degree $2$. All other vertices on sides
$V_0,\dots,V_6$ have degree $6$. On $V_7$ the vertices $v_1$ and $v_2$
have degree $7$ and the other $8$ vertices have degree $3$.  Since
$V_7$ has 10 vertices it is clear that $\HH$ is not isomorphic to
a subhypergraph of $\P'_8$. Nevertheless, it is routine to check
that $\HH$ is a linear intersecting $8$-partite hypergraph.
Suppose that $X$ is a $6$-cover of $\HH$. Then $X$ must include
$v_1$ and $v_2$ since otherwise it cannot cover the lines through
those vertices. The $24$ lines that avoid $v_1$ and $v_2$ induce
a subhypergraph with maximum degree $4$, which thus cannot be covered
by fewer than $6$ vertices. This contradiction shows that $X$ does not exist.
Since $V_0$ is a $7$-cover, we must have $\tau(\HH)=7$.

Some further properties of $\HH$ are discussed in the next section.

\section{Computational results}\label{sec:comp}

In this section we describe a computational proof of the following
result. 

\begin{theorem}\label{t:comp}
\phantom{kick item 1 onto new line} \
\begin{enumerate}
\item For $r\le7$ the only linear intersecting $r$-partite hypergraphs
  to achieve equality in Ryser's conjecture are subhypergraphs of
  $\P'_r$. In particular, there are none for $r=7$.
\item 
No subhypergraph of $\HH$ has
  $\tau=7$ and is isomorphic to a subhypergraph of $\P'_8$.
\item The smallest subhypergraph of $\HH$ with $\tau=7$ has $22$ lines.
  The smallest subhypergraph of $P'_8$ with $\tau=7$ also has $22$ lines.
\end{enumerate}
\end{theorem}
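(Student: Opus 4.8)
\textbf{Plan for Theorem~\ref{t:comp}.}
The plan is to prove all three parts by exhaustive computer search, exploiting the structural restrictions already established for linear intersecting hypergraphs with high covering number together with standard isomorph-rejection techniques. For part~(1), I would proceed by induction on the number of lines, generating all linear intersecting $r$-partite hypergraphs with $\tau=r-1$ (equivalently, all minimal such hypergraphs and their line-supersets) for each $r\le 7$. The key observation is that \lref{l:tau_r_requirements} and \lref{l:maxdegalmdisj} (applied with covering number $r-1$ in the role analogous to~$r$, or re-derived for the $\tau=r-1$ regime) sharply constrain the degree sequences, side sizes, and maximum degree, cutting the search space to a manageable size. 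One builds candidate hypergraphs line by line, maintaining the linear-intersecting invariant ($|e\cap f|=1$ for every pair of distinct lines already placed), and checking $\tau$ by a covering-number computation (e.g.\ an integer-program or branch-and-bound cover solver) whenever a candidate is complete. For each surviving example I would test isomorphism to a subhypergraph of $\P'_r$ using a canonical-form computation (\texttt{nauty} on the incidence bipartite graph, say), and verify that every example so obtained embeds in $\P'_r$; for $r=7$ the claim is simply that no candidate survives, which follows immediately from \cyref{c:Delta<=7} and the nonexistence of $\P'_7$ since a projective plane of order $6$ does not exist.

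For parts~(2) and~(3) the search is far smaller and fully explicit, because $\HH$ has only $38$ lines. I would enumerate subhypergraphs of $\HH$ directly: for part~(3), search over subsets of the $38$ lines by increasing cardinality, computing $\tau$ for each candidate, and record the minimum cardinality for which $\tau=7$ is first attained; symmetry under the automorphism group $\mathrm{PSL}(2,7)$ of $\HH$ can be used to prune the $2^{38}$ subsets down to orbit representatives, making the search feasible. The same enumeration, run in parallel on subhypergraphs of $\P'_8$, establishes the matching bound of $22$ lines for $\P'_8$. For part~(2) I would take the (finitely many) subhypergraphs of $\HH$ with $\tau=7$ found during the part~(3) search and test each for isomorphism to a subhypergraph of $\P'_8$ via canonical labelling of incidence structures; the assertion is that none of these tests succeeds. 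A clean way to certify part~(2) independently is to note that any subhypergraph with $\tau=7$ must retain the lines through $v_1$ and $v_2$ (by the covering argument used in \sref{s:H38} to show $\tau(\HH)=7$), and $V_7$ then carries the two degree-$7$ vertices $v_1,v_2$, forcing a side of size at least $10$ in any embedding image, which is incompatible with the side sizes $r-1=7$ available in $\P'_8$; I would verify computationally that this structural obstruction persists in every $\tau=7$ subhypergraph.

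The main obstacle I anticipate is the combinatorial explosion in part~(1) for $r=6,7$: naive line-by-line generation of all linear intersecting $r$-partite hypergraphs is prohibitive without aggressive pruning, so the bulk of the work is designing the search so that partial hypergraphs violating the degree/side constraints of \lref{l:tau_r_requirements}--\lref{l:maxdegalmdisj} (and the analogous counting inequalities from \tref{t:quadratic_arg} specialised to $\tau=r-1$) are rejected as early as possible, and so that isomorphic partial configurations are not expanded twice. The correctness of the whole argument rests on the search being genuinely exhaustive, so the delicate point is arguing that the pruning rules discard only genuinely infeasible or isomorphic-duplicate branches; I would document the search procedure and post the source code (as already done for the $r=9$ computation) so the enumeration can be independently reproduced and verified.
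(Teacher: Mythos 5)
Your overall plan---line-by-line generation of linear intersecting hypergraphs with \textit{nauty}-based isomorph rejection for part~(1), and exhaustive enumeration of subhypergraphs of $\HH$ and $\P'_8$ with covering-number tests for parts~(2) and~(3)---is essentially the method the paper uses. However, one step as stated is wrong: you claim that for $r=7$ the nonexistence of a surviving candidate ``follows immediately from \cyref{c:Delta<=7} and the nonexistence of $\P'_7$''. \cyref{c:Delta<=7} concerns linear intersecting $r$-partite hypergraphs with $\tau=r$, whereas part~(1) of \tref{t:comp} concerns equality in Ryser's conjecture, i.e.\ $\tau=r-1=6$ when $r=7$, about which that corollary says nothing. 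Likewise, the nonexistence of the projective plane of order $6$ only rules out examples arising as subhypergraphs of $\P'_7$; it does not rule out \emph{other} linear intersecting $7$-partite hypergraphs with $\tau=6$---establishing that none exist is precisely the content of the $r=7$ case, and is why the paper carries out the full search, split into the subcases $\Delta\in\{4,5,6\}$ (with the bounds $4\le\Delta\le6$ re-derived for the $\tau\ge6$ regime rather than borrowed from the $\tau=r$ lemmas, which you correctly flag would need re-derivation). Your search, if actually executed for $r=7$, would cover this, but the claimed shortcut is invalid and must be removed.

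A smaller issue concerns your proposed ``independent certification'' of part~(2): that every $\tau=7$ subhypergraph of $\HH$ must retain the lines through $v_1$ and $v_2$ and hence carry a side too large to embed in $\P'_8$. The argument in \sref{s:H38} that a $6$-cover must contain $v_1$ and $v_2$ is about covers of the \emph{full} hypergraph $\HH$; it does not constrain which lines a subhypergraph with $\tau=7$ must contain. A subhypergraph may discard lines through $v_1$ or $v_2$ and still have $\tau=7$, and even one retaining some of them could a priori have at most $7$ vertices of positive degree on $V_7$ (for instance, $22$ lines could in principle be distributed over $7$ vertices of $V_7$ with the available degrees $7,7,3,3,3,3,3$). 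You do say you would verify the obstruction computationally, but as written it is a conjecture, not a proof; the paper settles part~(2) by direct isomorphism testing against subhypergraphs of $\P'_8$, which is what the argument should rest on.
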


Clearly, by part (1), every $7$-partite linear intersecting hypergraph
satisfies $\tau\le5$. There are a number of non-isomorphic ways to achieve
$\tau=5$, including by the construction in \lref{l:fromMOLS_2extra_sides}.
For $7$-partite intersecting non-linear hypergraphs with $\tau=6$, see
\cite{AP,ABW15}.

Let $H$ be a $7$-partite linear intersecting hypergraph with
$h=|H|$ lines and $\tau\ge6$. By an argument similar to the proof of
\lref{l:maxdegalmdisj} we know that $\Delta(H)\le6$.  We next argue
that $\Delta(H)\ge4$. Let $H$ have $x_i$ vertices of degree
$i=1,\dots,\Delta$. Note that no line of $H$ can include two vertices
of degree $1$, otherwise the remaining vertices on the line would 
provide a $5$-cover. Hence $x_1\le h$. Together with 
\cite[Lem\,2.1]{ABW15} and \cite[Thm\,2.7]{ABW15}, 
we can then deduce that if $\Delta\le 4$ and $x_4\le 7$ then $x_1=h=17$,
$x_3\ge38$ and hence $x_2<0$. It follows that $\Delta\ge4$. Also
if $\Delta=4$ then $x_4>7$ so some side of $H$ has at least $2$ vertices
of degree $4$ on it.

We next describe the computation that established Part 1 of \tref{t:comp}.
By the above comments, we can split the problem for $r=7$ into three subcases 
$\Delta=4$, $\Delta=5$ and $\Delta=6$. We started with a vertex of degree
$\Delta$ on side $V_0$. (In the $\Delta=4$ case, we then added a second
vertex of degree $\Delta$ to $V_0$ in all possible ways up to isomorphism.)
Subsequent lines were added one at a time, ensuring that all pairs of lines 
intersected in a single point and that the assumed maximum degree was not
violated. After each line was added, we tested for isomorphism and kept
only one representative of each isomorphism class. 
For isomorphism checking we converted the hypergraphs into 
vertex-coloured graphs and applied the software \textit{nauty}~\cite{nauty}.
For $\Delta=4,5,6$ the
largest hypergraphs we obtained had $16,25,18$ lines respectively.
All hypergraphs that we built had a $5$-cover, proving the claim that
no linear $7$-partite intersecting hypergraph achieves $\tau=6$.

For $r\le6$, we performed computations as just described, except that
there was no need to split the problem into subcases according to the
maximum degree. Every hypergraph that we encountered could be extended
to $\P'_r$.

For $r\ge8$ the above method is not practical for a complete
enumeration. However, we did a partial enumeration and found a number
of linear intersecting $8$-partite hypergraphs that are maximal (no
lines can be added), have $\tau=7$ and yet are not isomorphic to
$\P'_8$. Most of these have the property that a few lines can be
removed to get something isomorphic to a subhypergraph of $\P'_8$.
However, the hypergraph $\HH$ described in \sref{s:H38} seems to be of a
very different nature, which is why we tested its properties more 
thoroughly.

\begin{table}
\centering
\begin{tabular}{|c|r|}
\hline
$|H|$&Number\\
\hline
22&         833    \\
23&     2168877    \\
24&    58227758    \\
25&   224055209    \\
26&   368614512    \\
27&   401984117    \\
28&   351960321    \\
\hline
\end{tabular}
\quad
\begin{tabular}{|c|r|}
\hline
$|H|$&Number\\
\hline
29&   268297692    \\
30&   183765292    \\
31&   114391098    \\
32&    64949914    \\
33&    33653522    \\
34&    15894680    \\
35&     6828374    \\
\hline
\end{tabular}
\quad
\begin{tabular}{|c|r|}
\hline
$|H|$&Number\\
\hline
36&     2660309    \\
37&      936491    \\
38&      296473    \\
39&       84035    \\
40&       21221    \\
41&        4757    \\
42&         953    \\
\hline
\end{tabular}
\quad
\begin{tabular}{|c|r|}
\hline
$|H|$&Number\\
\hline
43&         179    \\
44&          32    \\
45&           8    \\
46&           3    \\
47&           1    \\
48&           1    \\
49&           1    \\
\hline
\end{tabular}
\caption{\label{T:subPP}Number of isomorphism classes of subhypergraphs
$H$ of $\P'_8$, with $\tau(H)=7$.}
\end{table}

In \Tref{T:subPP} and \Tref{T:subH38}
all 2098796663 isomorphism classes of subhypergraphs
of $\P'_8$ with $\tau=7$, and all 17892655 isomorphism classes of subhypergraphs
of $\HH$ with $\tau=7$ are classified by their size.
The tables were prepared by exhaustive enumeration, using a heuristic
upper bound for the covering number to quickly eliminate most subhypergraphs
with $\tau(H)\le 6$, and employing \textit{nauty} to remove isomorphs.

\begin{table}
\centering
\begin{tabular}{|c|r|}
\hline
$|H|$&Number\\
\hline
22&         5    \\
23&     42310    \\
24&    1550265    \\
25&   5027821    \\
26&   5332373    \\
\hline
\end{tabular}
\quad
\begin{tabular}{|c|r|}
\hline
$|H|$&Number\\
\hline
27&   3376797    \\
28&   1625274    \\
29&   644482    \\
30&    215066    \\
31&    60609    \\
\hline
\end{tabular}
\quad
\begin{tabular}{|c|r|}
\hline
$|H|$&Number\\
\hline
32&     14308    \\
33&      2803    \\
34&      462    \\
35&       67    \\
36&       9    \\
\hline
\end{tabular}
\quad
\begin{tabular}{|c|r|}
\hline
$|H|$&Number\\
\hline
37&         3    \\
38&          1    \\
&              \\
&              \\
&              \\
\hline
\end{tabular}
\caption{\label{T:subH38}Number of isomorphism classes of subhypergraphs
$H$ of $\HH$, with $\tau(H)=7$.}
\end{table}

We end with an example of a subhypergraph of $\P'_8$ that has 22 lines and  
$\tau=7$:
\[\begin{array}{llllll}
03426434&\quad 04505645&\quad 06663521&\quad 11264344&\quad 15636215&\quad 16055456\\
22642443&\quad 24366152&\quad 25550564&\quad 32133654&\quad 34624066&\quad 35345331\\
43331546&\quad 44453313&\quad 46246660&\quad 51313465&\quad 55462606&\quad 56534133\\
60444555&\quad 61651632&\quad 62516326&\quad 63165263
\end{array}\]
and an example of a subhypergraph of $\HH$ that has 22 lines and  
$\tau=7$:
\[\begin{array}{llllll}
00000008&\quad 03615429&\quad 10536249&\quad 20361454&\quad 22222228&\quad 24510635\\
25043169&\quad 26105341&\quad 31402659&\quad 33333338&\quad 36541026&\quad 42016356\\
43562101&\quad 44444448&\quad 46320519&\quad 54306123&\quad 56413204&\quad 60425136\\
62503412&\quad 63140253&\quad 64251309&\quad 65312047
\end{array}\]
Both these hypergraphs have an automorphism group of order $3$, which is the
largest achieved by subhypergraphs with $h=22$ and $\tau=7$
within $\P'_8$ and $\HH$, respectively.

  \let\oldthebibliography=\thebibliography
  \let\endoldthebibliography=\endthebibliography
  \renewenvironment{thebibliography}[1]{%
    \begin{oldthebibliography}{#1}%
      \setlength{\parskip}{0.4ex plus 0.1ex minus 0.1ex}%
      \setlength{\itemsep}{0.4ex plus 0.1ex minus 0.1ex}%
  }%
  {%
    \end{oldthebibliography}%
  }

\end{document}